\documentclass[A4paper,12pt]{article}
\usepackage[utf8]{inputenc}
\usepackage{tikz}

\usepackage{amsmath,amsthm,amscd,amssymb,eucal,mathrsfs}
\usepackage{amsfonts}
\usepackage{url}
\usepackage{latexsym}
\usepackage{graphicx} 
\usepackage{fullpage}
\usepackage{multicol}
\usepackage{dsfont}
\usepackage{natbib}
\usepackage[all]{xy}
\usepackage{multirow}
\usepackage{color}

\newtheorem{thm}{Theorem}[section]

\newtheorem{remark}[thm]{Remark}

\newtheorem{Proposition}[thm]{Proposition}

\newtheorem*{Satz*}{Satz}

\newtheorem{Lemma}[thm]{Lemma}

\newcommand{\mathset}[1]{{\left\{#1\right\}}}
\newcommand{\absolute}[1]{\left\lvert#1\right\rvert}
\newcommand{\norm}[1]{\left\|#1\right\|}

\newcommand{\modulus}{{\rm\;\,\,\!\!\! mod}}

\DeclareMathOperator{\closure}{cl}
\DeclareMathOperator{\Spec}{Spec}

\DeclareMathOperator{\dom}{dom}

\DeclareMathOperator{\supp}{supp}

\DeclareMathOperator{\can}{can}
\DeclareMathOperator{\Hom}{Hom}
\DeclareMathOperator{\divisor}{div}

\title{Searching Schemes With $p$-Adic Neumann Boundary Value Problems}
\author{Patrick Erik Bradley}
\date{\today}

\begin{document}

\maketitle

\begin{abstract}
Firstly, for branched covering maps $f\colon Y\to X$ between smooth, separated schemes  of finite type over the  integral ring $O_K$ of a non-archimedean local field $K$, the ramification divisor is found to coincide with the divisor of the Radon-Nikodym derivative of the Radon measure associated with an algebraic differential form on $Y$ against the pullback measure of one on $X$, both taking Borel sets of the space $Y(O_K)$ of $O_K$-rational points of $Y$ as input values. 
Secondly, a series of $p$-adic Neumann Boundary Value Problems, depending on algebraic and pluricanonical differential forms with poles on spaces $X(O_K)$ coming from schemes is formulated and solved, extending previous work of the author. Thirdly, these are then used to solve reconstruction problems on schemes: the divisor of a pluricanonical form with poles with at worst log-terminal singularities, as well as Weierstrass points of projective algebraic curves can be reconstructed via repeatedly finding weak solutions of $p$-adic Neumann Boundary Value Problems. 
\end{abstract}



\section{Introduction}

The use of $p$-adic integration in arithmetic geometry has been found to be helpful in studying $p$-adic Hodge theory \cite{Ito2004,HR2008}, the wild McKay correspondence \cite{Yasuda2017}, or mirror symmetry \cite{GWZ2020}, just in order to name a few recent instances of such applications, likely inspired by \cite{Batyrev1999}. Underlying such a method is an integral structure on the scheme at hand, and then  measures induced by naturally obtained top differential forms are obtainable, as explained e.g.\ in the recent article \cite{BKL2026}, and masterfully exploited towards arithmetic applications, as in \cite{Oesterle1984}.
\newline

This insight opens the door to applications of $p$-adic analysis on $p$-adic analytic manifolds, as has been in the recent focus of the author's research \cite{diffMfp,HearingSerre}. It belongs to the ongoing project \cite{brad_habil}, from which the very recent formulation and study of $p$-adic Neumann Boundary Value Problems in \cite{brad_nbvp} takes upon itself applications aiming at extracting information from $p$-adic analytic manifolds, in particular when these are given as the set of integral points of a scheme defined over the integral ring of a non-archimedean local field $K$. The case of branched coverings of projective algebraic curves over $K$ studied in the end of this article forms a bridge to early work on such coverings between Mumford curves \cite{RETMumf,ExpliCycMumf}.
\newline

What this article aims at, is to widen the path connecting arithmetic geometry with
the analysis of $p$-adic stochastic processes and diffusion equations, cf.\ e.g.\ \cite{Zuniga2020,PW2025,BW2019,Weisbart2024}, also by going deeper into all mathematical disciplines participating in this connection. This is effected here by extending the scope of $p$-adic Neumann Boundary Value Problems to measures obtained by pluricanonical differential forms on a scheme on the one hand, and by involving pullbacks of differential forms via branched covering maps. In this way, methods from algebraic geometry become usable in translating pullback forms into the language of measure and integration. The most visible such example is the Radon-Nikodym Theorem which guarantees the existence of a distribution function $\frac{d\nu}{d\mu}$ in order to express one measure $\mu$ with another measure $\nu$, as long as they are compatible with another. This compatibility is called \emph{strongly continuous}, and amounts to $\mu$-zero sets also being $\nu$ zero sets, and then the Radon-Nikodym derivative $\frac{d\nu}{d\mu}$ exists. This is played out when a measure $\modulus(\omega_{\mathfrak{Y}})$  on a scheme $\mathfrak{Y}$, induced by an algebraic differential form $\omega_{\mathfrak{Y}}$, is expressible via the pullback $f^{*}\omega_{\mathfrak{X}}$ of a differential form $\omega_{\mathfrak{X}}$ on the the target scheme of a branched covering morphism $f\colon\mathfrak{Y}\to\mathfrak{X}$, times a function having the same zeros, including their multiplicities, as the ramification divisor $R$ of the covering map $f$. So, whereas in the algebraic description, the extra summand $R$ in the isomorphism
\[
K_{\mathfrak{Y}}\cong f^*K_{\mathfrak{X}}+R
\]
connecting the canonical divisor $K_{\mathfrak{Y}}$ with the pullback $f^*K_{\mathfrak{X}}$, is not the divisor of a function on $\mathfrak{Y}$, this is the case in the measure-theoretic description through the Radon-Nikodym derivative
\[
\frac{d\modulus(\omega_{\mathfrak{Y}})}{d\modulus(f^*\omega_{\mathfrak{X}})}
\]
having its divisor equal to $R$.
This is part of  the first main result:
\newline

\noindent
{\bf Theorem.}
The following statements holds true for the morphism $f\colon\mathfrak{Y}\to\mathfrak{X}$: 
\begin{enumerate}
\item The Radon-Nikodym derivative takes the value
\[
\frac{d\modulus(\omega_{\mathfrak{Y}})}
{d\modulus(f^*\omega_{\mathfrak{X}})}
(y)
=
\frac{\norm{\omega_{\mathfrak{Y}}}}
{\norm{f^*\omega_{\mathfrak{X}}}}
\]
for $\nu_{\mathfrak{Y},\can}$-almost all $y\in\mathfrak{Y}(O_K)$, where $\nu_{\mathfrak{Y},\can}$ is the canonical measure on $\mathfrak{Y}(O_K)$.
\item There is an equality
\[
\divisor\left(\frac{d\modulus(\omega_{\mathfrak{Y}})}{d\modulus(f^*\omega_{\mathfrak{X}})}\right)
= R
\]
of divisors on $\mathfrak{Y}(O_K)$.
\item The map
\[
\mathfrak{Y}(O_K) \to\mathfrak{R},\;
y\mapsto \frac{d\modulus(\omega_{\mathfrak{Y}})}{d\modulus(f^*\omega_{\mathfrak{X}})}
\]
is regular, i.e.\ does not contain any poles.
\end{enumerate}

The next type of results are more general formulations of Neumann Boundary Value Problems (NBVP) than in \cite{brad_nbvp}. These involve a Dirichlet form associated with a Laplacian integral operator
\[
\Delta_\Omega u(x)=\int_{\mathfrak{X}(O_K)}w_\Omega(x,y) (u(x)-u(y))\,d\modulus(\omega)(y)
\]
whose kernel function $w(x,y)$ yields a graphon structure first on $\mathfrak{X}(O_K)$, and then for an open subset  $\Omega\subset \mathfrak{X}(O_K)$ a graphon-theoretic boundary
\[
\delta_w\Omega=\mathset{y\in\mathfrak{X}(O_K)\setminus\Omega\mid w(x,y)\neq 0}
\]
and closure
\[
\closure_w\Omega=\Omega\cup\delta_w\Omega\,,
\]
together with a restricted kernel function
\[
w_\Omega(x,y)=\begin{cases}
w(x,y),&x,y\in\closure_w\Omega
\\
0,&\text{otherwise.}
\end{cases}
\]
The NBVP formulates itself as
\[
\Delta_\Omega u|_\Omega=0,\quad N_{\delta_w\Omega}u|_{\delta_w\Omega}=\phi
\]
for a given function $\phi\in L^\infty(\closure_w\Omega,\modulus(\omega))$, where 
\[
N_{\delta_w\Omega}u(x)=\int_\Omega w(x,y)(u(x)-u(y))\,d\modulus(\omega)(y)
\]
acts as the normal derivative for $x\in\delta_w\Omega$. During the course of the text, the measure $\modulus(\omega)$ will be replaced by measures associated with other differential forms, depending on the context. The theorems are existence and uniqueness results for weak solutions of the NBVP, extending the scope of \cite[Theorem 5.6]{brad_nbvp}.
\newline

The final main result is a reconstruction theorem for a pluricanonical $\mathds{Q}$-divisor
\[
\divisor(\omega)=\sum\limits_{i=1}^s a_i\mathfrak{D}_i
\]
on a scheme $\mathfrak{X}$ with poles, as long as the singularities are at worst log-terminal, via solving series of suitable NBVPs weakly. This is done by detecting the volumes of the fibres of the reduction map $\rho\colon\mathfrak{X}(O_K)\to\mathfrak{X}(\mathds{F}_q)$, the orders $a_1,\dots,a_s\in\mathds{Q}$, and also the divisor $\mathfrak{D}_j$ containing a given ball. All of this via weakly solved NBVPs. This result is followed by extensions of the Closeness-to-Zero Theorem \cite[Theorem 6.3]{brad_nbvp} in order to detect Weierstrass points on a projective algebraic curve, and then the Weierstrass points of a hyperelliptic curve via its $2$-sheeted branched covering coming from the hyperelliptic involution. Again through weakly solved suitable NBVPs.
\newline

The following Section 2 introduces necessary preliminaries not already mentioned here. This is followed by Section 3 which provides an overview on canonical measures and differential forms, including the results on the Radon-Nikodym derivative. Section 4 extends this to finite, separable, dominant maps between separated, smooth $O_K$-schemes of finite type. Section 5 is devoted to the NBVP using algebraic differential forms. Section 6 concludes with the final results stated in the previous paragraph.
\section{Preliminaries}

Analytic manifolds are defined over a non-archimedean local field $K$ just like they are defined  over the real number field $\mathds{R}$, thereby taking into account that the transition functions between overlapping charts are meant to be $K$-analytic, cf.\ \cite{Igusa2001,Schneider2011,Serre1992,WeilAAG}. 
In order to be able to construct integral Laplacian operators over their real- and complex-
valued function spaces, it was found useful to use a connected nerve complex coming from
a suitable atlas, which in the compact case can be assumed finite \cite{diffMfp}. In order to be able to
define an analogue of geodetic distance (called $p$-adic geodetic distance), a Radon measure
given by a nowhere vanishing analytic differential $n$-form on the $p$-adic analytic $n$-manifold $X$ can be used. In the case that $X$ is compact, then it is known that such
a differential $n$-form exists, cf.\ \cite[Théorème (2)]{Serre1965}.
What is to be kept in mind about the geodetic distance in the $p$-adic setting, is that it is locally $p$-adic distance on charts, given  by the maximum norm on $K^n$. If the $p$-adic analytic manifold  has an integral structure, as introduced
e.g.\ in \cite{BKL2026}, then the transition maps between overlapping charts take balls to balls of equal
radius. And that is very useful for defining the $p$-adic geodetic distance, whereas in \cite{diffMfp}, this was
done in the compact case with the so-called \emph{equalising} property of transition maps. Integral
structures for this task are used in \cite{brad_nbvp}.
In the case of an $n$-dimensional scheme $\mathfrak{X}$ defined over the ring $O_K$ of integers of a non-archimedean local field $K$, with the property of being separated, smooth, and of finite type, there is a natural structure of a compact $K$-analytic manifold on the set $\mathfrak{X}(O_K)$ of $O_K$-rational points of $\mathfrak{X}$. Furthermore, there is a canonical measure $\nu_{\mathfrak{X},\can}$ coming from a nowhere vanishing differential $n$-form, and this Radon measure can be used, together with a finite open covering of $\mathfrak{X}(O_K)$ having a connected nerve complex, to use the geodetic distance in order to define distance-based kernel functions for $p$-adic Laplacian integral operators on spaces of functions $\mathfrak{X}(O_K)\to\mathds{C}$. Notice that $\mathfrak{X}$ being an $O_K$-scheme provides $\mathfrak{X}(O_K)$ with a natural $O_K$-structure, from which most of the notions can be derived naturally. The only exception is the atlas giving rise to a connected nerve complex. Its vertices are the open sets from the finite covering $\mathcal{U}$ provided for by the atlas, and all other $k$-simplices are defined via intersections of $k$ overlapping sets in $\mathcal{U}$.  Paths in the nerve complex are defined in the usual way, as it is a simplicial complex, and this allows to extend the notion of path to all of $\mathfrak{X}(O_K)$, as each face $\sigma$ in the nerve complex has an underlying open subset $U_\sigma$ of $\mathfrak{X}(O_K)$ covered with sufficiently small $p$-adic balls, all contained in the set $U_\sigma$, now acting as the root of the tree of balls contained in $U_\sigma$. The result is an infinite partially ordered set of balls and faces, at whose boundary lies $\mathfrak{X}(O_K)$. The article \cite{diffMfp} has a more detailed description of this concept.
\newline

Given smooth morphisms of schemes:
\begin{align}\label{composedMorphism}
\xymatrix{
\mathfrak{Y}\ar[r]^\pi&\mathfrak{X}\ar[r]^\rho&\mathfrak{S}
}
\end{align}
there is an exact sequence of quasicoherent sheaves
\begin{align}\label{exactSequence_QCSh}
\xymatrix{
0\ar[r]&\pi^*\Omega_{\mathfrak{X}/\mathfrak{S}}
\ar[r]&\Omega_{\mathfrak{Y}/\mathfrak{S}}\ar[r]&\Omega_{\mathfrak{Y}/\mathfrak{X}}\ar[r]&0
}
\end{align}
cf.\ \cite[Theorem 22.2.25]{Vakil2017} together with \cite[Exercise 21.2.S]{Vakil2017}.
Since determinantal line bundles behave well in exact sequences, cf.\ 
\cite[Exercise 13.5.H]{Vakil2017}, it now follows that
\[
\omega_{\mathfrak{Y}/\mathfrak{S}}\cong
\pi^*\omega_{\mathfrak{X}/\mathfrak{S}}\otimes\omega_{\mathfrak{Y}/\mathfrak{X}}^\vee\,,
\]
where $\mathscr{L}^\vee$ is the dual of a line bundle $\mathscr{L}$.
\newline

Here, we dare using the painfully sloppy notation for differential forms pulled back to charts, in order for the reader to become more resilient. In particular, this is going to simplify notation in many integrals. In \cite[Remark 2.2]{HearingSerre}, it is explained how to properly read this often encountered sloppy notation.

\section{Canonical measures and differential forms}

Let us summarise briefly the content of \cite[Chapitre I.2]{Oesterle1984}. Given an $O_K$-scheme $\mathfrak{X}$ which is separated, smooth, of finite type, and whose fibres are all of equal dimension $n$, observe that the space $\mathfrak{X}(O_K)$ of $O_K$-rational points of $\mathfrak{X}$ is a compact $K$-analytic $n$-manifold, open inside the algebraic variety $\mathfrak{X}(K)$. 
\newline

There exists a unique Radon measure $\nu_{\can}$ on $\mathfrak{X}(O_K)$ such that for all $n\ge1$ the fibres of the maps
\[
\mathfrak{X}(O_K)\to\mathfrak{X}_k=\mathfrak{X}(O_K/\mathfrak{p}^k)
\]
all have measure $q^{-kn}$. This is known as the \emph{canonical measure} on $\mathfrak{X}(O_K)$ and is nowhere vanishing, cf.\ e.g.\ \cite[Chapter 2.2]{WeilAAG}, where it is shown to come from a nowhere vanishing $K$-analytic differential $n$-form, called a \emph{gauge form}. As  the $K$-analytic manifold $\mathfrak{X}(O_K)$ is compact, there always exists a gauge form on $\mathfrak{X}(O_K)$, cf.\ \cite[Théorème (2)]{Serre1965}.
\newline

The subscript ${}_{\mathfrak{X}}$ in the notation of the canonical measure is omitted in this section, but will reappear in the next, as there maps between schemes are considered.

\begin{remark}
Let us remark here that what Serre proved in \cite{Serre1965} is that compact $p$-adic analytic varieties seem somewhat uninteresting,  the only interesting quantity apart from their dimension being the number of $p$-adic balls it decomposes into modulo $(q-1)$. However, it is extra structure unearthed from them which makes them much more interesting, e.g.\ them being open submanifolds of algebraic varieties (like here), or through an integral structure and a compatible atlas, cf.\ e.g.\ \cite{diffMfp,HearingSerre,brad_habil,BKL2026}.
\end{remark}

Let $\omega$ be an algebraic differential $n$-form on the generic fibre $X$ of $\mathfrak{X}$. It corresponds to an analytic differential $n$-form on $\mathfrak{X}(K)$, also denoted as $\omega$. By restriction it is an analytic differential $n$-form on $\mathfrak{X}(O_K)$. Now, for any $x\in\mathfrak{X}(O_K)$, the tangent space $T_x(\mathfrak{X})$ identifies with an $O_K$-lattice in the $K$-vector space $T_x(\mathfrak{X}(O_K))$, i.e.\ an \emph{integral structure} as shown in \cite[Proposition 3.2.15]{BKL2026} (where it is called $R$-structure for $R=O_K$). The assignment
\[
\mathfrak{X}(O_K)\to\Hom\left(\bigwedge^n T_x(\mathfrak{X}(O_K)),K\right)\,,\;x\mapsto \omega_{(x)}\,,
\]
where $\omega_{(x)}$ is the linear form induced by $\omega$, yields that the image
\[
\mathfrak{l}_x
=\omega_{(x)}\left(\bigwedge^n T_x(\mathfrak{X}(O_K))\right)\subseteq K
\]
is a fractional ideal of $K$. Hence, the modulus
\begin{align}\label{modulusDiffn}
\norm{\omega(x)}:=\norm{\omega_{(x)}}=\absolute{\ell_x}
\end{align}
of any generator $\ell_x\in K$ of this fractional ideal $\mathfrak{l}_x$ depends only on $\mathfrak{l}_x$.
\newline

Let $\modulus(\omega)$ be the measure on $\mathfrak{X}(O_K)$, called the \emph{modulus of $\omega$}. It can be written locally on a given chart as 
\begin{align}\label{locallyModulus}
\int_{U}h(x)\,d\modulus(\omega(x))
=\int_Uh(x)\absolute{g_U(x)}\absolute{dx}\,,
\end{align}
where $h\in\mathcal{D}(\mathcal{X}(O_K))$ is a test function, 
$d\mu(x)=\absolute{dx}$ is the Haar measure (as it appears in integrals), and  
\[
\omega|_U=g_U\,dx
\]
a local representation of the algebraic differential $n$-form with $g_U$  a polynomial having coefficients in $K$.
\newline



Then we have:

\begin{thm}[Oesterl\'e 1984]\label{OesterleTheorem84}
It holds true that
\[
\int_A d\modulus(\omega)=\mu(O_K)^n
\int_A\norm{\omega(x)}\,d\nu_{\can}(x)
\]
for any Borel set $A\subset\mathfrak{X}(O_K)$.
\end{thm}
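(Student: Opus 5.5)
Both sides of the asserted identity are Radon measures on the compact $K$-analytic manifold $\mathfrak{X}(O_K)$. My plan is to show they agree on a basis of compact open sets, namely the fibres of the reduction maps $\mathfrak{X}(O_K)\to\mathfrak{X}(O_K/\mathfrak{p}^k)$ for $k$ large. Since these fibres generate the Borel $\sigma$-algebra and both measures are finite, the Carathéodory/Dynkin uniqueness argument then extends equality to all Borel sets $A$. (If $\omega$ has zeros, $\norm{\omega(x)}$ is still bounded and continuous away from a measure-zero set, so it is integrable and the right-hand side is a genuine Radon measure.)

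For the local computation I would use smoothness and Hensel's lemma. Fix $x_0\in\mathfrak{X}(O_K)$. Since $\mathfrak{X}$ is smooth of relative dimension $n$ over $O_K$, there is a Zariski neighbourhood of $x_0$ with an étale map to $\mathbb{A}^n_{O_K}$, given by local coordinates $t_1,\dots,t_n$. By Hensel's lemma this induces, for $k\ge1$, a bijection between the fibre $B_k(x_0)$ of $\mathfrak{X}(O_K)\to\mathfrak{X}(O_K/\mathfrak{p}^k)$ through $x_0$ and the ball $t(x_0)+\mathfrak{p}^kO_K^n$. Moreover $dt_1\wedge\dots\wedge dt_n$ generates $\bigwedge^n$ of the integral lattice $T_x(\mathfrak{X})^\vee$ at each point of the chart. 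Two consequences follow. First, the canonical measure of $B_k(x_0)$, which is $q^{-kn}$, equals $\mu(\mathfrak{p}^kO_K^n)/\mu(O_K)^n$ for the Haar measure $\mu$ on $K^n$ pulled back via $t$. Hence on the chart $\nu_{\can}=\mu(O_K)^{-n}\absolute{dt}$. Second, writing $\omega|_U=g_U\,dt$, the generator $\ell_x$ of $\mathfrak{l}_x$ can be taken to be $g_U(x)$, so $\norm{\omega(x)}=\absolute{g_U(x)}$.

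Substituting both facts into the local description \eqref{locallyModulus} gives
\[
\int_{B}d\modulus(\omega)=\int_B\absolute{g_U(x)}\,\absolute{dt}=\mu(O_K)^n\int_B\norm{\omega(x)}\,d\nu_{\can}(x)
\]
for every such fibre $B$ inside the chart. Covering the compact space $\mathfrak{X}(O_K)$ by finitely many disjoint such fibres (possible because the fibres of reduction modulo $\mathfrak{p}^k$ partition it) yields the equality on all compact open sets. The extension to Borel sets is as in the first paragraph.

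I expect the main obstacle to be the second step: checking that the étale coordinate chart really identifies the integral lattice $T_x(\mathfrak{X})$ with $O_K^n$ uniformly over the whole fibre, and that the Hensel bijection preserves the normalisation $q^{-kn}$. This is where the integral structure of \cite[Proposition 3.2.15]{BKL2026} has to be used carefully. I would handle it by noting that an étale map between smooth $O_K$-schemes induces isomorphisms on $\Omega^1$, hence on the $O_K$-lattices of tangent spaces at $O_K$-points. Its Jacobian is therefore a unit, so the change of variables is measure-preserving in $\absolute{dt}$.
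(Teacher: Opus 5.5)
Your argument is correct, but it does not follow the paper, which gives no argument of its own and simply cites Oesterl\'e's Theorem 2.4. What you supply instead is a self-contained local proof, in four steps:
\begin{enumerate}
\item take \'etale coordinates $t$ on an affine Zariski neighbourhood of each closed point;
\item use Hensel's lemma (formal \'etaleness plus completeness of $O_K$) to identify each reduction fibre $B_k(x)$ with the ball $t(x)+\mathfrak{p}^kO_K^n$;
\item conclude that, on the chart, $\nu_{\can}=\mu(O_K)^{-n}\absolute{dt}$ and $\norm{\omega(x)}=\absolute{g_U(x)}$;
\item pass to all Borel sets by uniqueness of finite measures agreeing on the countable $\pi$-system of reduction fibres.
\end{enumerate}
This is in substance the standard argument behind the cited result; the remark after Lemma~\ref{RNderOmega} points the same way. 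Compared with the bare citation, your route buys two things. It makes visible where the factor $\mu(O_K)^n$ comes from: the mismatch between the normalisation of the Haar measure on $K$ and the normalisation $q^{-kn}$ of $\nu_{\can}$. It also gives the identity as an equality of measures on each chart, which directly yields the Radon--Nikodym derivative of Lemma~\ref{RNderOmega}.

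Two minor points. First, $\norm{\omega}$ is in fact continuous and bounded on all of $\mathfrak{X}(O_K)$, since on an affine chart $g_U$ becomes integral after multiplying by a power of a uniformiser. So no exceptional null set is needed in your integrability remark. Second, the change-of-variables remark in your last paragraph is superfluous. The quantity $\norm{\omega(x)}$ is defined intrinsically through the lattice $T_x(\mathfrak{X})$, and you only ever use the single chart $t$ on each fibre of the reduction map.
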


\begin{proof}
Cf.\ \cite[Theorem 2.4]{Oesterle1984}.
\end{proof}

The significance of Oesterl\'e's in result Theorem \ref{OesterleTheorem84} is that it provides  an explicit expression for the Radon-Nikodym
derivative 
\[
\frac{d\modulus(\omega)}{d\nu_{\can}}=\mu(O_K)^n\norm{\omega}
\]
of $\modulus(\omega)$ w.r.t.\ the canonical measure $\nu_{\can}$. Actually, its pure existence can be shown quite easily by proving strong continuity as follows:

\begin{Lemma}\label{RNderOmega}
The Radon-Nikodym derivative
\[
\frac{d\modulus(\omega)}{d\nu_{\can}}
\]
exists on $\mathfrak{X}(O_K)$.
\end{Lemma}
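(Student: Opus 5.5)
The plan is to apply the Radon–Nikodym theorem to the pair of measures $\modulus(\omega)$ and $\nu_{\can}$ on $\mathfrak{X}(O_K)$. This requires three things: both measures are $\sigma$-finite, $\modulus(\omega)$ is absolutely continuous (strongly continuous in the sense of the introduction) with respect to $\nu_{\can}$, and both are defined on the Borel $\sigma$-algebra. Note that $\mathfrak{X}(O_K)$ is compact. Hence $\nu_{\can}$ is a finite Radon measure; indeed its total mass is $q^{-kn}\cdot\#\mathfrak{X}_k$. Likewise $\modulus(\omega)$ is a Radon measure, given locally by the continuous density $\absolute{g_U}$ against Haar measure. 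So $\sigma$-finiteness is immediate once we fix a finite atlas.

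The key step is strong continuity: every $\nu_{\can}$-null Borel set is $\modulus(\omega)$-null. I would work chart by chart. Cover $\mathfrak{X}(O_K)$ by finitely many charts $U$, each identified with an open compact subset of $O_K^n$. We may take these to be fibres of the reduction maps, which become polydiscs via the integral structure (\cite[Proposition 3.2.15]{BKL2026}). On such a chart, $\nu_{\can}|_U$ is a constant multiple of Haar measure $\mu$. This holds because both measures assign $q^{-kn}$, up to the fixed normalisation $\mu(O_K)^n$, to every fibre of $\mathfrak{X}(O_K)\to\mathfrak{X}(O_K/\mathfrak{p}^k)$ inside $U$. These fibres are balls of radius $q^{-k}$ in the chart coordinates, and they generate the Borel $\sigma$-algebra. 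Hence $\nu_{\can}(A)=0$ implies $\mu(A\cap U)=0$ for every chart $U$. Then \eqref{locallyModulus}, applied with $h$ approximating $1_{A\cap U}$ (or directly, since the measure $\absolute{g_U}\,d\mu$ is absolutely continuous w.r.t.\ $\mu$), gives
\[
\modulus(\omega)(A\cap U)=\int_{A\cap U}\absolute{g_U(x)}\,\absolute{dx}=0 .
\]
Summing over the finitely many charts yields $\modulus(\omega)(A)=0$.

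The Radon–Nikodym theorem then gives a nonnegative measurable function $\frac{d\modulus(\omega)}{d\nu_{\can}}$, unique $\nu_{\can}$-almost everywhere. Chart by chart, it equals $\absolute{g_U}$ divided by the constant density of $\nu_{\can}$ with respect to $\mu$. This is consistent with Theorem \ref{OesterleTheorem84}, though that theorem is not needed for existence.

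The main obstacle is justifying that $\nu_{\can}$ is locally comparable to Haar measure in the charts. This requires that the chart coordinates match the integral structure, so that reduction fibres are genuine balls. I would handle it with the smoothness of $\mathfrak{X}$: by Hensel's lemma, étale coordinates around each $\mathfrak{p}$-reduction fibre identify that fibre with $\mathfrak{p}O_K^n$ isometrically. Should this be too delicate, the fallback is the fact quoted from \cite[Chapter 2.2]{WeilAAG} that $\nu_{\can}$ is the measure of a gauge form $\omega_0$. Then locally $\nu_{\can}=\absolute{g^0_U}\,d\mu$ with $g^0_U$ nowhere vanishing and continuous, so null sets of $\nu_{\can}$ and $\mu$ coincide on each compact chart.
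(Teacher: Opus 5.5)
Your proposal is correct and follows essentially the paper's route: chart by chart you show that $\nu_{\can}$-null sets are Haar-null, hence null for $\modulus(\omega)=\absolute{g_U}\,\absolute{dx}$, and then you apply the Radon--Nikodym theorem, checking explicitly the finiteness of both measures, which the paper leaves implicit. The only variation is how you get the first step: you use the defining property of $\nu_{\can}$ on reduction fibres, which are polydiscs in \'etale coordinates, so $\nu_{\can}$ is a constant multiple of Haar measure there; the paper instead uses a gauge form with nowhere-vanishing density. That is precisely your fallback, and you state it more accurately than the paper's ``somewhere $>0$'' justification.
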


\begin{proof}
The measure $\modulus(\omega)$ associated with the algebraic differential $n$-form is described locally in (\ref{locallyModulus}).
 For the canonical measure $\nu_{\can}$ on $\mathcal{X}(O_K)$, take its  description via a $K$-analytic differential form denoted as $\omega_{X}$. Locally it is of the form
\[
\omega_{X}(x)=\phi(x)\,dx\,,
\]
with $\phi(x)$ a $K$-valued $K$-analytic function nowhere vanishing on the local chart. Hence, 
\[
\absolute{\omega_X(x)}=\absolute{\phi(x)}\absolute{dx}\,,
\]
where $\absolute{dx}$ is yet another way of expressing the Haar measure $\mu_{K^n}$ on $K^n$. But $\absolute{\phi(x)}$ coincides with the absolute value of a polynomial on this local chart. Hence, we can argue as follows: Let $A\subset\mathfrak{X}(O_K)$ be a Borel set, and assume that $A$ is contained inside some closed-open local chart $U$ of $\mathfrak{X}(O_K)$. Then
\begin{align*}
0=\nu_{\can}(A)&=\int_A\absolute{\omega_X(x)}=\int_A\absolute{\phi(x)}\absolute{dx}
\\
&\Rightarrow\;\mu_{K^n}(A)=0\hspace*{2cm}\text{[as $\absolute{\phi(\cdot)}\in\mathcal{D}(U)$ is somewhere $>0$]}
\\
&\Rightarrow\;\modulus(\omega)(A)=\int_A\absolute{g_U(x)}\absolute{dx}
\\
&\qquad\le\max\limits_{x\in U}\absolute{g_U(x)}\mu_{K^n}(A)=0\,,
\end{align*}
where the latter inequality holds true, because $\absolute{g_U}$ is continuous on the set $U$ which is compact by compactness of $\mathfrak{X}(O_K)$.
Thus $\modulus(\omega)$ is strongly continuous w.r.t.\ $\nu_{\can}$. Hence, the Radon-Nikodym Theorem can be applied, and this  proves the existence of the Radon-Nikodym derivative 
\[
\frac{d\modulus(\omega)}{d\nu_{\can}}\,,
\]
as asserted.
\end{proof}

We remark that the proof above can also be further modelled into a calculation of the Radon-Nikodym derivative. However, this ends up in a proof quite similar to Oesterl\'e's proof of \cite[Theorem 2.4]{Oesterle1984}.

\section{Separable maps and integral structures}

Let $f\colon \mathfrak{Y}\to \mathfrak{X}$ be a finite, separable, dominant morphism between $O_K$-schemes which are assumed separated, smooth, of finite type. The map $f$  is generically \'etale with the generic dimension being $n$. 
Assume further that $\omega_{\mathfrak{X}}$ and $\omega_{\mathfrak{Y}}$ are algebraic differential $n$-forms on the generic fibres $X$ of $\mathfrak{X}$, and $Y$ of $\mathfrak{Y}$, respectively. Again, denote the associated differential $n$-forms on $\mathfrak{X}(O_K)$ and $\mathfrak{Y}(O_K)$ as $\omega_X$ and $\omega_Y$, respectively.
\newline

In this situation, the composed scheme morphisms of (\ref{composedMorphism}) is thus:
\begin{align}
\xymatrix{
\mathfrak{Y}\ar[r]^f&\mathfrak{X}\ar[r]&\Spec(O_K)\,,
}
\end{align}
where the second map is the structure morphism. And the exact sequence of quasi-coherent sheaves (\ref{exactSequence_QCSh})
plays out as equivalences of line bundles or divisors:
\begin{align}\label{canonicalIsomorphism}
\omega_{\mathfrak{Y}/O_K}&\cong f^*\omega_{\mathfrak{X}}\otimes_{\mathcal{O}_{\mathfrak{Y}}}\mathcal{O}_{\mathfrak{Y}}(R)\,,
\end{align}
in which the canonical line bundles appear, and where the relative one is  $\mathcal{O}_{\mathfrak{Y}}(R)$. The divisor $R$ contains the ramification information of the morphism $f$. Hence, the name \emph{ramification divisor} for $R$. The key property of  is that $R$ is effective:

\begin{Lemma}\label{effectiveRamification}
The ramification divisor $R$ is effective. 
\end{Lemma}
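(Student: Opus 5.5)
The plan is to identify $R$ as the divisor of a regular section of a line bundle, namely the Jacobian determinant of $f$. Since $f$ is a finite, separable, dominant morphism between smooth $O_K$-schemes of the same relative dimension $n$, the sheaves $f^*\Omega_{\mathfrak{X}/O_K}$ and $\Omega_{\mathfrak{Y}/O_K}$ are both locally free of rank $n$. The natural map $df\colon f^*\Omega_{\mathfrak{X}/O_K}\to\Omega_{\mathfrak{Y}/O_K}$ from (\ref{exactSequence_QCSh}) therefore induces, on top exterior powers, a morphism of line bundles
\[
\det(df)\colon f^*\omega_{\mathfrak{X}}\to\omega_{\mathfrak{Y}/O_K}.
\]
Equivalently, this is a global section $s$ of $\omega_{\mathfrak{Y}/O_K}\otimes f^*\omega_{\mathfrak{X}}^\vee$. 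The first step is to check that $s$ is not identically zero. This holds because $f$ is generically étale, which follows from separability and dominance: there $df$ is an isomorphism, so $s$ is nonzero at the generic points of $\mathfrak{Y}$.

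The second step is to identify $R$ with $\divisor(s)$. The isomorphism (\ref{canonicalIsomorphism}) is exactly the statement $\omega_{\mathfrak{Y}/O_K}\cong f^*\omega_{\mathfrak{X}}\otimes\mathcal{O}_{\mathfrak{Y}}(\divisor(s))$. So $R$ is, by definition or up to this identification, the zero divisor of $s$.

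The third step is a local computation. On an affine open $V\subset\mathfrak{Y}$ with étale coordinates $y_1,\dots,y_n$, choose $U\subset\mathfrak{X}$ with $f(V)\subset U$ and étale coordinates $x_1,\dots,x_n$ on $U$. Then
\[
f^*(dx_1\wedge\dots\wedge dx_n)=J\,dy_1\wedge\dots\wedge dy_n,\qquad J=\det\left(\frac{\partial (x_i\circ f)}{\partial y_j}\right)\in\mathcal{O}_{\mathfrak{Y}}(V).
\]
Since $J$ is a regular function, the local equation of $R$ on $V$ is $J$, and its divisor is effective: every coefficient $\mathrm{ord}_D(J)\ge0$ at prime divisors $D$. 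Smoothness of $\mathfrak{Y}$ makes the local rings regular, hence UFDs, so Weil and Cartier divisors agree and orders are nonnegative for regular functions. The local divisors glue because $J$ changes by units of the coordinate changes.

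The main obstacle I expect is pinning down the sign and direction conventions. The displayed isomorphism is written with $\mathcal{O}_{\mathfrak{Y}}(R)$ rather than the dual that appears in the preliminaries ($\omega_{\mathfrak{Y}/\mathfrak{S}}\cong\pi^*\omega_{\mathfrak{X}/\mathfrak{S}}\otimes\omega_{\mathfrak{Y}/\mathfrak{X}}^\vee$). I therefore need to argue directly from the injective map $\det(df)$ that $R$ is the cokernel-support divisor, as above, rather than rely on that formula. I would also need to handle components of $R$ that lie in the special fibre, where separability over $K$ does not prevent inseparability mod $\mathfrak{p}$. These contribute vertical components, but they are still zeros of the regular function $J$, so effectivity is unaffected.
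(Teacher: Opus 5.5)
Your proof is correct, but it gets effectivity from a different definition of $R$ than the paper uses. The paper takes $R=\sum_P \mathrm{length}(\Omega_{\mathfrak{Y}/\mathfrak{X}})_P\,[P]$ over codimension-one points $P$ (of $\mathfrak{Y}$). Effectivity is then immediate because lengths are nonnegative, and the rest of the paper's sketch goes into the isomorphism (\ref{canonicalIsomorphism}), obtained by taking determinants in (\ref{exactSequence_QCSh}).

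You instead set $R=\divisor(s)$ for the Jacobian section $s=\det(df)$. This makes (\ref{canonicalIsomorphism}) automatic, since $\omega_{\mathfrak{Y}/O_K}\otimes f^*\omega_{\mathfrak{X}}^\vee\cong\mathcal{O}_{\mathfrak{Y}}(\divisor(s))$ for a section nonvanishing at the generic points of a regular scheme. Effectivity then follows from $J$ being a regular function.

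The two definitions agree, and you should add the one line that shows it, since your second step otherwise pins $R$ down only up to linear equivalence. At a codimension-one point $P$, the ring $\mathcal{O}_{\mathfrak{Y},P}$ is a DVR. There $(\Omega_{\mathfrak{Y}/\mathfrak{X}})_P$ is the cokernel of the injective $n\times n$ Jacobian matrix, and by elementary divisors its length is $v_P(J)$.

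What your route buys is robustness. The paper's sketch treats (\ref{exactSequence_QCSh}) as a sequence of locally free sheaves and appeals to smoothness of $f$, which for a finite $f$ holds only on the \'etale locus. Where $f$ ramifies, $\Omega_{\mathfrak{Y}/\mathfrak{X}}$ is torsion, and its determinant has to be computed from the two-term resolution given by $df$, which is exactly your Jacobian computation. You also handle vertical components in the special fibre explicitly, which the paper does not mention.
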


\begin{proof}
This is well-known. A sketch of the proof is given here: the definition of $R$ as
\[
R=\sum\limits_{P\in\mathfrak{X}\;\text{of codim $1$}}\text{length}(\Omega_{\mathfrak{Y}/\mathfrak{X}})_P[P]
\]
obviously reveals the divisor $R$ as being effective. However, it is by proving the isomorphism (\ref{canonicalIsomorphism}) which makes its effectivity effectively meaningful. For this, the exact sequence (\ref{exactSequence_QCSh}) yields, via taking $\wedge^n$, the isomorphism
\[
\omega_{\mathfrak{Y}/O_K}\cong f^*\omega_{\mathfrak{X}/O_K}\otimes\det(\Omega_{\mathfrak{Y}/\mathfrak{X}})\,,
\]
because that exact sequence is one of locally free sheaves of finite rank on a scheme, cf.\ \cite[Corollary 4.2]{Liu2002}.
By smoothness of the morphism $f$, the latter equals
\[
\det(\Omega_{\mathfrak{X}/\mathfrak{Y}})\cong\omega_{\mathfrak{Y}/\mathfrak{X}}\,,
\]
and the isomorphism now follows from Duality Theory, cf.\ \cite[Chapter 6.4]{Liu2002}. 
\end{proof}

\begin{Proposition}\label{RN_genEtale}
The following Radon-Nikodym derivative
\[
\frac{d\modulus(\omega_{\mathfrak{Y}})}{d\modulus(f^*\omega_{\mathfrak{X}})}
\]
exists, and is of the form
\[
\frac{d\modulus(\omega_{\mathfrak{Y}})}{d\nu_{Y,\can}}
\frac{d\nu_{Y,\can}}{d\modulus(f^*\omega_{\mathfrak{X}})}
\]
on $\mathfrak{Y}(O_K)$, where the canonical measure on $\mathfrak{Y}(O_K)$ is denoted as $\nu_{Y,\can}$.
\end{Proposition}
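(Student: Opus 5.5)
The plan is to reduce everything to Lemma \ref{RNderOmega} and the chain rule for Radon--Nikodym derivatives, with $\nu_{Y,\can}$ as an intermediate measure. First, apply Lemma \ref{RNderOmega} on $\mathfrak{Y}$ to the algebraic form $\omega_{\mathfrak{Y}}$. This gives $\modulus(\omega_{\mathfrak{Y}})\ll\nu_{Y,\can}$, and Theorem \ref{OesterleTheorem84} identifies the derivative as $\mu(O_K)^n\norm{\omega_{\mathfrak{Y}}}$. The same lemma applies verbatim to $f^*\omega_{\mathfrak{X}}$: its proof only uses that locally on a chart $U$ the form is $g_U\,dy$ with $\absolute{g_U}$ continuous on the compact set $U$, and $f^*\omega_{\mathfrak{X}}$ is again an algebraic $n$-form on the generic fibre $Y$. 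Hence $\modulus(f^*\omega_{\mathfrak{X}})\ll\nu_{Y,\can}$ as well.

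The second step is the reverse absolute continuity $\nu_{Y,\can}\ll\modulus(f^*\omega_{\mathfrak{X}})$. Locally on a chart $U\subset\mathfrak{Y}(O_K)$ write $f^*\omega_{\mathfrak{X}}=g_U\,dy$, where $g_U$ is, up to a unit, the local function $h_U$ of $\omega_{\mathfrak{X}}$ composed with $f$, times the Jacobian determinant of $f$. Because $f$ is finite, separable and dominant, it is generically étale, so the Jacobian is not identically zero on any component. If $\omega_{\mathfrak{X}}\neq0$, then $h_U\circ f$ is not identically zero either. Thus $g_U$ is a nonzero algebraic (hence analytic) function on $U$, and its zero set is a proper Zariski closed subset of $U$, which has Haar measure zero.

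Now take a Borel set $A\subset U$ with $\modulus(f^*\omega_{\mathfrak{X}})(A)=\int_A\absolute{g_U}\absolute{dy}=0$. Then $\absolute{g_U}=0$ almost everywhere on $A$ with respect to Haar measure, so $\mu_{K^n}(A\setminus Z(g_U))=0$. Since $Z(g_U)$ is Haar-null, $\mu_{K^n}(A)=0$. Because $\nu_{Y,\can}$ has a bounded continuous density with respect to $\absolute{dy}$ on $U$, it follows that $\nu_{Y,\can}(A)=0$. Covering $\mathfrak{Y}(O_K)$ by finitely many compact open charts extends this to all Borel sets. So the two measures are mutually absolutely continuous, and $\frac{d\nu_{Y,\can}}{d\modulus(f^*\omega_{\mathfrak{X}})}$ exists. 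It equals the reciprocal of $\mu(O_K)^n\norm{f^*\omega_{\mathfrak{X}}}$ almost everywhere, which is finite off a null set.

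Finally, I apply the chain rule for Radon--Nikodym derivatives. All measures are finite, since $\mathfrak{Y}(O_K)$ is compact and the densities are bounded. From $\modulus(\omega_{\mathfrak{Y}})\ll\nu_{Y,\can}\ll\modulus(f^*\omega_{\mathfrak{X}})$ we get $\modulus(\omega_{\mathfrak{Y}})\ll\modulus(f^*\omega_{\mathfrak{X}})$, hence existence, and the product formula holds almost everywhere. The main obstacle is the second step: showing that the pullback form does not vanish on a set of positive measure. This is exactly where separability (generic étaleness) and $\omega_{\mathfrak{X}}\neq0$ enter, together with the standard fact that the zero locus of a nonzero analytic function on a chart is Haar-null. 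I would verify the latter via Oesterlé's or Igusa's local description, or by a Fubini induction on $n$.
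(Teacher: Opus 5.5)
Your argument is correct, but it proves the absolute continuities in the opposite direction from the paper.

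The paper sets out to prove $\modulus(f^*\omega_{\mathfrak{X}})\ll\nu_{Y,\can}\ll\modulus(\omega_{\mathfrak{Y}})$. It gets the first by bounding the local density of $f^*\omega_{\mathfrak{X}}$ from above on a compact chart. It gets the second by bounding the density $\absolute{g_U}$ of $\omega_{\mathfrak{Y}}$ from below by its minimum $m$ on the chart.

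You instead take $\modulus(\omega_{\mathfrak{Y}})\ll\nu_{Y,\can}$ from Lemma \ref{RNderOmega}. You then prove $\nu_{Y,\can}\ll\modulus(f^*\omega_{\mathfrak{X}})$ by showing that the local coefficient of $f^*\omega_{\mathfrak{X}}$ is a nonzero analytic function whose zero locus is Haar-null.

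\textbf{Why your direction is the right one.} Your chain is exactly what the chain rule needs for the product $\frac{d\modulus(\omega_{\mathfrak{Y}})}{d\nu_{Y,\can}}\frac{d\nu_{Y,\can}}{d\modulus(f^*\omega_{\mathfrak{X}})}$ in the statement. The paper's chain, taken literally, produces $\frac{d\modulus(f^*\omega_{\mathfrak{X}})}{d\nu_{Y,\can}}$ and $\frac{d\nu_{Y,\can}}{d\modulus(\omega_{\mathfrak{Y}})}$, i.e.\ the reciprocal derivative. Its lower-bound step also only yields $\mu(A)=0$ when $m>0$, that is, on charts where $\omega_{\mathfrak{Y}}$ has no zeros.

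\textbf{What each route buys.} The zero-set lemma you isolate is what handles zeros in general. Your route also makes visible where generic \'etaleness (the Jacobian is not identically zero) and $\omega_{\mathfrak{X}}\neq0$ are used; neither plays a visible role in the paper's bounds. The cost is that you must supply that lemma (Igusa, or Fubini induction). You also need the identity principle: a nonzero regular function on a smooth irreducible $K$-variety cannot vanish on a nonempty $p$-adic open set of $K$-points, e.g.\ via $O_{Y,y}\hookrightarrow\widehat{O}_{Y,y}$.

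\textbf{One hypothesis to state precisely.} Read $\omega_{\mathfrak{X}}\neq0$ as nonvanishing on every irreducible component of $X$; each component of $Y$ maps onto such a component. This is genuinely needed. Otherwise $\modulus(f^*\omega_{\mathfrak{X}})$ vanishes on a component where $\modulus(\omega_{\mathfrak{Y}})$ need not, and the derivative cannot exist.

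Running your argument for $\omega_{\mathfrak{Y}}$ as well, when it is nonzero on each component, gives mutual absolute continuity of all three measures. That reconciles both directions.
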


\begin{proof}
This is the chain rule for Radon-Nikodym derivatives, and is valid if
\[
\modulus(f^*\omega_{\mathfrak{X}})<<\nu_{Y,\can}<<\modulus(\omega_{\mathfrak{Y}})
\]
holds true, where $\nu<<\nu'$ denotes  strong continuity of the measure $\nu$ with respect to the measure $\nu'$.

\smallskip
The first strong continuity relation is seen as follows:
 let $A\subset\mathfrak{Y}(O_K)$ be a Borel set with
\[
\nu_{Y,\can}(A)=0\,.
\]
Hence, as in the proof of Lemma \ref{RNderOmega}, it follows that $\mu_{K^n}(A)=0$, and thus
\begin{align*}
\int_A\absolute{f^*\omega_{\mathfrak{X}}}
&=\int_A\absolute{g_U(f(y))}\absolute{\det(T_yf)}\absolute{dy}
\stackrel{(*)}{\le}\max\limits_{y\in \mathfrak{Y}(O_K)}\mu_{K^n}(A)=0\,,
\end{align*}
where the inequality $(*)$ follows in this way: first, since $f$ is a morphism of $O_K$-schemes, it holds true that 
\[
\absolute{\det(T_yf)}=1
\]
for $\nu_{Y,\can}$-almost every $y\in\mathfrak{Y}(O_K)$, i.e.\ more precisely, in the locus of $f$ being \'etale. Next, the map
\[
y\mapsto\absolute{g_U(f(y))}
\]
is continuous on the closed-open chart $V$ in the compact $\mathfrak{Y}(O_K)$ and containing $A$, where $U$ is a chart in $\mathfrak{X}(O_K)$ containing $f(V)$. This proves $\modulus(f^*\omega_{\mathfrak{X}})<<\nu_{Y,\can}$.

\smallskip
The second one is seen as follows: let $A\subseteq\mathfrak{Y}(O_K)$ be a Borel set, contained in some closed-open chart $U$, and such that
\[
\int_A d\modulus(\omega_{\mathcal{Y}})=0\,.
\]
Then
\[
0=\int_A\absolute{\omega_{\mathfrak{Y}}(y)}
=\int_A\absolute{g_U(y)}
\absolute{dy}\,,
\]
where locally
\[
\omega_X|_U=g_U\,dx
\]
for the chart $U$ of $\mathfrak{Y}(O_K)$ containing $A$. The set $U$ being a compact subset of $\mathcal{Y}(O_K)$, observe that the
map
\[
U\to\mathds{R},\;y\mapsto \absolute{g_U(y)}
\]
is continuous, and thus takes a minimum $m$ (and maximum) on $U$. Hence,
\[
0=\int_A d\modulus(\omega_{\mathfrak{Y}})\ge m\,\mu(A)\ge0
\]
implies that $\mu(A)=0$ for the Haar measure $\mu$ on $K^n$. But this clearly implies that $\nu_{Y,\can}(A)=0$, again by compactness of $U$. Hence, $\nu_{Y,\can}<<\modulus(\omega_{\mathcal{Y}})$.

\smallskip 
By the Radon-Nikodym Theorem, it now follows that both derivatives
\[
\frac{d\modulus(f^*\omega_{\mathfrak{X}})}{d\nu_{Y,\can}}\,,\quad\frac{d\nu_{Y,\can}}{d\modulus(\omega_{\mathfrak{Y}})}
\]
exist, and the assertion follows by the chain rule for Radon-Nikodym derivatives.
\end{proof}

\begin{thm}\label{RN_divisor}
The following statements holds true for the morphism $f\colon\mathfrak{Y}\to\mathfrak{X}$: 
\begin{enumerate}
\item The Radon-Nikodym derivative takes the value
\[
\frac{d\modulus(\omega_{\mathfrak{Y}})}{d\modulus(f^*\omega_{\mathfrak{X}})}(y)
=\frac{\norm{\omega_{\mathfrak{Y}}(y)}}{\norm{f^*\omega_{\mathfrak{X}}(y)}}
\]
for $\nu_{Y,\can}$-almost all $y\in\mathfrak{Y}(O_K)$.
\item There is an equality
\[
\divisor\left(\frac{d\modulus(\omega_{\mathfrak{Y}})}{d\modulus(f^*\omega_{\mathfrak{X}})}\right)=R
\]
of divisors on $\mathfrak{Y}(O_K)$.

\item The map 
\[
\mathfrak{Y}(O_K)\to \mathds{R},\;
y\mapsto\frac{d\modulus(\omega_{\mathfrak{Y}})}{d\modulus(f^*\omega_{\mathfrak{X}})}(y)
\]
is regular, i.e.\ does not contain any poles.
\end{enumerate}
\end{thm}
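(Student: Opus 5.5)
The plan is to combine Proposition \ref{RN_genEtale} with Oesterl\'e's Theorem \ref{OesterleTheorem84}, applied twice on $\mathfrak{Y}(O_K)$, and then read off the divisor locally.

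\textbf{Part 1.} By Theorem \ref{OesterleTheorem84} applied to $\omega_{\mathfrak{Y}}$,
\[
\frac{d\modulus(\omega_{\mathfrak{Y}})}{d\nu_{Y,\can}}=\mu(O_K)^n\norm{\omega_{\mathfrak{Y}}}\,.
\]
The same theorem also applies to the algebraic $n$-form $f^*\omega_{\mathfrak{X}}$ on the generic fibre $Y$, giving
\[
\frac{d\modulus(f^*\omega_{\mathfrak{X}})}{d\nu_{Y,\can}}=\mu(O_K)^n\norm{f^*\omega_{\mathfrak{X}}}\,.
\]
Now invoke the chain rule of Proposition \ref{RN_genEtale}. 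The factor $\frac{d\nu_{Y,\can}}{d\modulus(f^*\omega_{\mathfrak{X}})}$ is the reciprocal of the second density, wherever that density is nonzero. Its zero set is contained in the zero locus of the algebraic form $f^*\omega_{\mathfrak{X}}$, which is a proper Zariski-closed subset and hence a $\nu_{Y,\can}$-null set. Multiplying the two densities, the constants $\mu(O_K)^n$ cancel and we obtain
\[
\frac{\norm{\omega_{\mathfrak{Y}}(y)}}{\norm{f^*\omega_{\mathfrak{X}}(y)}}
\]
almost everywhere. In doing so I will tacitly use both absolute-continuity directions as proved in Proposition \ref{RN_genEtale}.

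\textbf{Part 2.} I work locally. Choose an affine open of $\mathfrak{Y}$ with étale coordinates $y_1,\dots,y_n$ over an affine open of $\mathfrak{X}$ with coordinates $x_1,\dots,x_n$. Write
\[
\omega_{\mathfrak{Y}}=g\,dy_1\wedge\dots\wedge dy_n,\qquad \omega_{\mathfrak{X}}=h\,dx_1\wedge\dots\wedge dx_n,
\]
so that
\[
f^*\omega_{\mathfrak{X}}=(h\circ f)\,J\,dy_1\wedge\dots\wedge dy_n,\qquad J=\det(\partial f_i/\partial y_j).
\]
On such a chart $\norm{\cdot}$ is the absolute value of the coefficient, because the lattice $\bigwedge^nT_y$ is generated by $dy_1\wedge\dots\wedge dy_n$ evaluated on the integral basis. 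The quotient therefore equals $\absolute{g/((h\circ f)J)}$. The rational function $g/((h\circ f)J)$ has divisor
\[
\divisor(\omega_{\mathfrak{Y}})-\divisor(f^*\omega_{\mathfrak{X}}).
\]
By the isomorphism (\ref{canonicalIsomorphism}), the local generator of $\mathcal{O}(R)$ is exactly the Jacobian determinant $J$, since the different is generated by $J$ locally. This identifies the divisor of the quotient with $R$, at least when $\omega_{\mathfrak{Y}}$ is chosen to correspond to $f^*\omega_{\mathfrak{X}}$ under (\ref{canonicalIsomorphism}). That is the reading of the statement I will adopt: $\divisor$ of the Radon-Nikodym derivative means the divisor of the rational function whose absolute value it is, restricted to $\mathfrak{Y}(O_K)$.

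\textbf{Part 3.} Regularity then follows from Lemma \ref{effectiveRamification}: $R$ is effective, so the local function has no poles. Its absolute value is therefore bounded on the compact chart, and the derivative extends continuously, given by the formula of Part 1 on each chart.

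The main obstacle is Part 2. There I must pin down how $\omega_{\mathfrak{Y}}$ relates to $f^*\omega_{\mathfrak{X}}$, so that the divisor of the ratio is exactly $R$ rather than $R$ plus a principal divisor $\divisor(\omega_{\mathfrak{Y}})-\divisor(f^*\omega_{\mathfrak{X}})-R$. I would first try to use (\ref{canonicalIsomorphism}) to write
\[
\omega_{\mathfrak{Y}}=f^*\omega_{\mathfrak{X}}\otimes s_R,
\]
where $s_R$ is the canonical section of $\mathcal{O}(R)$, and to check that the $O_K$-integral structures are compatible so that $\norm{\cdot}$ respects this factorization.
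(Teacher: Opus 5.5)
Your Part 1 is the paper's argument: Oesterl\'e's theorem for $\omega_{\mathfrak Y}$ and for $f^*\omega_{\mathfrak X}$, then the chain rule. Note that the chain rule needs $\modulus(\omega_{\mathfrak Y})\ll\nu_{Y,\can}\ll\modulus(f^*\omega_{\mathfrak X})$, which your almost-everywhere nonvanishing argument supplies directly; Proposition~\ref{RN_genEtale} as written argues the two reverse relations.

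The genuine gap is in Part 2, and your own local formula exposes it. The Jacobian sits in the \emph{denominator} of $g/((h\circ f)J)$, and $\divisor(f^*\omega_{\mathfrak X})=f^*\divisor(\omega_{\mathfrak X})+R$ already contains $R$. Hence
\[
\divisor\Bigl(\frac{g}{(h\circ f)J}\Bigr)=\divisor(\omega_{\mathfrak Y})-f^*\divisor(\omega_{\mathfrak X})-R\,.
\]
So $R$ enters with a minus sign, and its effectivity produces poles, not zeros. Your proposed normalization does not repair this. If $\omega_{\mathfrak Y}$ corresponds to $f^*\omega_{\mathfrak X}\otimes s_R$ under (\ref{canonicalIsomorphism}), then $\omega_{\mathfrak Y}$ \emph{is} the pulled-back form, $g=(h\circ f)J$, and the derivative is identically $1$ with divisor $0$. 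Getting exactly $R$ would require $\divisor(\omega_{\mathfrak Y})=f^*\divisor(\omega_{\mathfrak X})+2R$, which nothing in the hypotheses provides. Concretely, take $f\colon\mathds{A}^1_{O_K}\to\mathds{A}^1_{O_K}$, $y\mapsto y^2$, with odd residue characteristic, $\omega_{\mathfrak Y}=dy$ and $\omega_{\mathfrak X}=dx$. Then $R=\mathset{y=0}$ and the derivative equals $\absolute{2y}^{-1}=\absolute{y}^{-1}$. So Parts 2 and 3 fail for general $\omega_{\mathfrak Y},\omega_{\mathfrak X}$, and your Part 2 cannot be closed without an additional normalization.

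The paper's proof of Part 2 takes the same route as yours and leaves the same point unaddressed. It writes $\omega_{\mathfrak Y}$ locally as $g_U(y)\,\omega_{\mathfrak X}(f(y))$ via (\ref{canonicalIsomorphism}), reads off the derivative as $\absolute{g_U}$, and glues over charts. That line becomes correct under two conditions. First, the denominator must be the pullback measure $\norm{\omega_{\mathfrak X}(f(y))}\,d\nu_{Y,\can}(y)$, without the Jacobian factor. Second, one must take $\omega_{\mathfrak Y}=f^*\omega_{\mathfrak X}$. Then the derivative is $\absolute{J}$, whose divisor is $R$ and which is bounded by $1$. That reading would be justified by the claim in Proposition~\ref{RN_genEtale} that $\absolute{\det(T_yf)}=1$ almost everywhere. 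But this claim fails on whole residue discs of ramification points: in the example, $\absolute{2y}<1$ on all of $\mathfrak{m}$, a set of canonical measure $q^{-1}$. So the reading is incompatible with Part 1 as you and the paper state it. To obtain a proof you must fix one of these conventions explicitly in the statement. Once you do, your Jacobian computation is exactly the right tool for Parts 2 and 3.
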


\begin{proof}
1.
In Oesterl\'e's Theorem \ref{OesterleTheorem84}, it was shown that
\[
\frac{d\modulus(\omega_{\mathfrak{Y}})}{d\nu_{Y,\can}}=\mu(O_K)^n\norm{f^*\omega_{\mathfrak{X}}}\,,
\]
and in Proposition \ref{RN_genEtale}, it is shown that
\begin{align}\label{strongCont}
\modulus(f^*\omega_{\mathfrak{X}})<<\nu_{Y,\can}\,.
\end{align}
So,  the analogous Oesterl\'e formula
\[
\frac{d\modulus(f^*\omega_{\mathfrak{X}})}{d\nu_{Y,\can}}=\mu(O_K)^n\norm{f^*\omega_{\mathfrak{X}}}
\]
also holds true, and thus yields, together with the chain rule that
\[
\frac{d\modulus(\omega_{\mathfrak{Y}})}{d\modulus(f^*\omega_{\mathfrak{X}})}
=\frac{d\modulus(\omega_{\mathfrak{Y}})/d\nu_{Y,\can}}{d\modulus(f^*\omega_{\mathfrak{X}})/d\nu_{Y,\can}}
=\frac{\norm{\omega_{\mathfrak{Y}}}}{\norm{f^*\omega_{\mathfrak{X}}}}\,,
\]
as asserted.

\smallskip\noindent
2. The isomorphism (\ref{canonicalIsomorphism}) 
takes the differential form $\omega_{\mathfrak{Y}}(y)$ on $\mathfrak{Y}(O_K)$ to one which locally on a chart $U$ of $\mathfrak{Y}(O_K)$ has the form
\[
g_U(y)\omega_{\mathfrak{X}}(f(y))\,.
\]
By taking the corresponding measures on $\mathfrak{Y}(O_K)$, one now sees that 
\[
\absolute{g_U(y)}\,d\modulus(\omega_{\mathfrak{X}})(f(y))
\]
on $U$. Hence, the Radon-Nikodym derivative 
equals 
\[
\frac{d\modulus(\omega_{\mathfrak{Y}})}{d\modulus(f^*\omega_{\mathfrak{X}})}=\absolute{g_U(y)}
\]
on that chart. By taking a cover of $\mathfrak{Y}(O_K)$ with charts, one obtains the equality of divisors.

\smallskip\noindent
3.
This follows immediately from Lemma \ref{effectiveRamification} and by the equality of divisors in 2. 
\end{proof}

\begin{remark}
Notice that the Radon-Nikodym derivative associated here with the ramification divisor of a branched covering is an actual function, whereas the ramification divisor itself is not the divisor of a function, as in general, it is a positive divisor. 
\end{remark}
\section{Neumann Boundary Value Problems with algebraic $n$-forms}

In \cite{brad_nbvp}, a global  section of the analytic structure sheaf
of a compact $p$-adic analytic manifold $X$
was used in order to include zeros in the Radon measure, like this:
\[
\nu_g(x)=\absolute{g(x)}\absolute{\omega(x)}
\]
with $x\in X$, the function $g\colon X\to K$ is regular analytic on $X$, and $\omega$ a nowhere vanishing differential $n$-form, where $n\in\mathds{N}$ is the dimension of $X$.
However, in the algebraic setting, there are quite often not enough global sections of the structure sheaf of a projective algebraic variety. E.g.\ the projective line $\mathds{P}^1_K$ allows only constant functions without poles. The extension to global sections of the canonical sheaf is therefore the natural next step.
\newline 

Here, fix an algebraic differential $n$-form $\omega$ on a scheme $\mathfrak{X}$ over $O_K$, separated, smooth, and of finite type.
Let $\Omega\subseteq\mathfrak{X}(O_K)$ be an open subdomain, and take as Neumann Boundary Value Problem (NBVP) for $(\phi,\Omega;\omega)$:
\begin{align}\label{NBVP_omega}
\Delta^\alpha_{\omega,\Omega}u|_\Omega&=0,\quad N_{\omega,\delta_w\Omega}u|_{\delta_w\Omega}=\phi,
\end{align}
where
\[
u\in\dom(\mathcal{E}_{\omega,\closure_w\Omega})\,,
\]
and $\phi\in L^2(\delta_w\Omega,\modulus(\omega))$.
The Dirichlet form $\mathcal{E}_{\omega,\closure_w\Omega}$ is 
\[
\mathcal{E}_{\omega,\closure_w\Omega}(u,v)=
\int_{\closure_w\Omega}\int_{\closure_w\Omega}w(x,y)\left(\overline{u(x)}-\overline{u(y)}\right)(v(x)-v(y))\,
d\modulus(\omega)(y)\,d\modulus(\omega(x))\,,
\]
and the associated Laplacian integral operator is
\[
\Delta_{\omega,\Omega}^\alpha u(x)
=\int_{\mathfrak{X}(O_K)}w_\Omega(x,y)(u(x)-u(y))\,d\modulus(\omega(y))
\]
with kernel function
\[
w_\Omega(x,y)=\begin{cases}
w(x,y),&x,y\in\closure_w\Omega
\\
0,&\text{otherwise}
\end{cases}
\]
and
\[
w(x,y)=d_\Lambda(x,y)^{-\alpha}\,,
\]
where $d_\Lambda(x,y)$ is the geodetic distance on $\mathfrak{X}(O_K)$, assumed endowed with an $O_K$-structure $\Lambda$ and a finite $O_K$-compatible atlas.

\begin{Proposition}\label{smallWaveletEigenvalue}
The wavelets $\psi$ with support on a small ball in $\closure_w\Omega\subset \mathfrak{X}(O_K)\setminus V(\omega)$ are eigenfunctions of $\Delta_{\omega,\Omega}^\alpha$ with eigenvalue
\begin{align*}
\lambda_{\omega,\psi}&=\int_{\closure_w\Omega\setminus B(a)}d_\Lambda(x,y)^{-n\alpha}
\,d\modulus(\omega)(y)
\\
&+\left[\modulus(\omega)(B(a))\right]^{-n\alpha}
\left(1-q^{-n}(1+(-1)^n)\right)
\end{align*}
for $\alpha>0$.
\end{Proposition}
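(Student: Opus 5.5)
We compute $\Delta^\alpha_{\omega,\Omega}\psi$ directly for a Kozyrev-type wavelet $\psi$ supported on a small ball $B(a)\subset\closure_w\Omega$, and split the integral over $y$ into $y\notin B(a)$ and $y\in B(a)$. This is the usual scheme for ultrametric Laplacians, as in \cite{brad_nbvp}. First we fix the ball small enough that three things hold. It lies inside a single chart $U$ of the $O_K$-compatible atlas. The geodetic distance $d_\Lambda$ restricted to it is the chart's maximum-norm distance, which is possible because transition maps take balls to equal-radius balls. And the local representative $g_U$ of $\omega|_U=g_U\,dx$ has constant absolute value on $B(a)$. The last point uses $B(a)\cap V(\omega)=\emptyset$: $\absolute{g_U}$ is locally constant away from zeros of $g_U$, so on a small enough ball $\modulus(\omega)|_{B(a)}=\absolute{g_U(a)}\,\absolute{dx}$ is a constant multiple of Haar measure. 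Consequently $\psi$ has mean zero with respect to $\modulus(\omega)$, and the Haar-measure wavelet structure (constancy on the $q^n$ maximal subballs $B_j\subset B(a)$, characters summing to zero) carries over unchanged.

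For the outer part, fix $x\in B(a)$ and take $y\in\closure_w\Omega\setminus B(a)$. Ultrametricity gives $d_\Lambda(x,y)=d_\Lambda(a,y)$, so the kernel does not depend on $x$. Since $\psi(y)=0$ there, the integral $\int_{\closure_w\Omega\setminus B(a)}w_\Omega(x,y)(\psi(x)-\psi(y))\,d\modulus(\omega)(y)$ equals $\psi(x)$ times the first term of the claimed eigenvalue. Here $w_\Omega$ vanishes off $\closure_w\Omega$, which is why the domain is $\closure_w\Omega$. For $x\notin B(a)$, the same ultrametric argument together with the mean-zero property gives $\Delta\psi(x)=0$ when $x\in\closure_w\Omega$. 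When $x\notin\closure_w\Omega$ we get $0$ trivially, because $w_\Omega(x,\cdot)=0$.

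For the inner part, take $x\in B(a)$ and $y\in B(a)$. Write $\psi(x)-\psi(y)$ and use $\int_{B(a)}\psi\,d\modulus(\omega)=0$ on the term $\int_{B(a)\setminus B_j}w(x,y)\psi(y)$, where $B_j\ni x$ is the maximal subball. For $y$ in a different subball $B_k$ the distance equals the radius of $B(a)$, so the kernel is constant there. For $y\in B_j$ the contribution vanishes, because $\psi$ is constant on $B_j$. The result is $\psi(x)$ times an explicit constant depending only on the radius of $B(a)$ and on $q^{-n}$. To match the stated form, we express the radius through the volume: $\modulus(\omega)(B(a))=\absolute{g_U(a)}\mu(B(a))$, so the kernel value at the ball's scale becomes a power $[\modulus(\omega)(B(a))]^{-n\alpha}$ with the normalisation the paper uses for $d_\Lambda^{-n\alpha}$. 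The correction factor $1-q^{-n}(1+(-1)^n)$ then comes from counting $q^n$ subballs and from the character sum over them.

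The main obstacle is this last bookkeeping step. We have to pin down exactly how the paper normalises the kernel (the statement uses $d_\Lambda^{-n\alpha}$ while the operator was defined with $d_\Lambda^{-\alpha}$) and how the volume of $B(a)$ under $\modulus(\omega)$ replaces the Haar radius. We also have to reproduce precisely the factor $1-q^{-n}(1+(-1)^n)$. The plan is to do the computation first in a single chart for $\absolute{g_U}\equiv1$, following the corresponding proof in \cite{brad_nbvp}, and then rescale by the constant $\absolute{g_U(a)}$.
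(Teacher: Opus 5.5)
\paragraph{Verdict.} Your eigenfunction argument is correct and is the same reduction the paper uses, but your final step cannot reach the displayed eigenvalue.

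\paragraph{What matches the paper.} The paper's proof only notes that locally $d\modulus(\omega)|_U=\absolute{g_U}\,d\nu_{\can}$ and says the proof of \cite[Proposition 6.1]{brad_nbvp} carries over. You spell that out. Your observation that $\absolute{g_U}$ is constant on a small enough ball disjoint from $V(\omega)$ is exactly what justifies the carry-over. It gives $\int_{B(a)}\psi\,d\modulus(\omega)=0$. Together with $d_\Lambda(x,y)=d_\Lambda(a,y)$ for $x\in B(a)$, $y\notin B(a)$, it also gives $\Delta^\alpha_{\omega,\Omega}\psi=0$ off $B(a)$ and the eigenfunction property on $B(a)$. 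For $y$ outside the chart this is a property of the geodetic distance, not of the chart norm, but the statement presupposes it too. Up to here the proposal is fine.

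\paragraph{The gap.} The problem is not bookkeeping. Let $B(a)$ have radius $q^{-k}$ and $x\in B(a)$. Constancy of $\psi$ on the $q^n$ maximal subballs, plus the mean-zero property, give exactly
\[
\int_{B(a)}d_\Lambda(x,y)^{-\alpha}(\psi(x)-\psi(y))\,d\modulus(\omega)(y)
=q^{k\alpha}\modulus(\omega)(B(a))\,\psi(x)
=\absolute{g_U(a)}^{\alpha/n}\modulus(\omega)(B(a))^{1-\alpha/n}\,\psi(x).
\]
So the subball and character count produces no factor $1-q^{-n}(1+(-1)^n)$. Trading the radius for the $\modulus(\omega)$-volume also leaves a residual $\absolute{g_U(a)}^{\alpha/n}$, and the exponent is $1-\alpha/n$, not $-n\alpha$.

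No renormalisation can repair this, because the displayed constant fails a scaling test. Replace $\omega$ by $\pi\omega$ with $\pi$ a uniformiser. This leaves the zero set on $\mathfrak{X}(O_K)$ unchanged. It also leaves $w$ unchanged, since $w=d_\Lambda^{-\alpha}$ does not involve $\omega$, and with it $\closure_w\Omega$. Finally, $\psi$ changes at most by a constant. But it multiplies $\modulus(\omega)$, and hence $\Delta^\alpha_{\omega,\Omega}$ and every eigenvalue, by $q^{-1}$. The second summand of the displayed $\lambda_{\omega,\psi}$ is instead multiplied by $q^{n\alpha}$, and that summand is nonzero.

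\paragraph{What you actually prove.} Your argument establishes the eigenfunction property with eigenvalue
\[
\int_{\closure_w\Omega\setminus B(a)}d_\Lambda(a,y)^{-\alpha}\,d\modulus(\omega)(y)+q^{k\alpha}\modulus(\omega)(B(a)).
\]
If you use the kernel $d_\Lambda^{-n\alpha}$ that appears in the statement, replace $\alpha$ by $n\alpha$ throughout. The paper's one-line proof does not supply the identification with the displayed constant either; it defers it entirely to \cite{brad_nbvp}.
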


\begin{proof}
Since locally, the measure $\modulus(\omega)$ is of the form
\[
d\modulus(\omega)|_U=\absolute{g_U}\, d\nu_{\can}\,,
\]
the proof in \cite[Proposition 6.1]{brad_nbvp} carries over to this case.
\end{proof}

Since the semigroup $e^{-t\Delta_{\omega,\Omega}^\alpha}$ ($t\ge0$) is ultracontractive, cf.\ \cite[Example 5.5]{brad_nbvp}, there is a corresponding heat kernel function and Green function, both of which can be expressed explicitly with an orthonormal basis of $L^2(\closure_w\Omega,\modulus(\omega))$. 
Such an onb exists due to the ultracontractivity property, cf. \cite[Section 5]{brad_nbvp} for the details.
The Green function takes the following form:
\[
G_{\omega,\Omega}(x,y)=\sum\limits_{\psi\colon\lambda_{\omega,\psi}}\lambda_{\omega,\psi}^{-1}\psi(x)\overline{\psi(y)}
\]
and can be used for solving the NBVP (\ref{NBVP_omega}).

\begin{thm}\label{NBVP_alg}
Let $\omega$ be a regular $O_K$-algebraic differential form on $\mathfrak{Y}$. Then the NBVP (\ref{NBVP_omega}) with $\phi\in L^\infty(\delta_w\Omega,\modulus(\omega))$ has a unique solution $u\in\dom(\mathcal{E}_{\omega,\Omega})$ such that
\begin{align*}
\int_{\delta_w\Omega}&u(y)\,d\modulus(\omega)(y)=0
\end{align*}
and
\begin{align*}
u(x)&=\int_{\delta_w\Omega}\phi(y)G_{\omega,\Omega}(x,y)\,d\modulus(\omega)(y)
\end{align*}
\end{thm}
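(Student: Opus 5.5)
The plan is to reduce Theorem \ref{NBVP_alg} to the existence and uniqueness result \cite[Theorem 5.6]{brad_nbvp}. In that setting the measure was $\nu_g=\absolute{g}\absolute{\omega}$, with $g$ a global regular analytic function and $\omega$ a gauge form. So the first step is to write $\modulus(\omega)$ in this form. By Theorem \ref{OesterleTheorem84} and Lemma \ref{RNderOmega},
\[
d\modulus(\omega)=\mu(O_K)^n\norm{\omega}\,d\nu_{\can},
\]
and $\nu_{\can}$ comes from a gauge form by \cite[Théorème (2)]{Serre1965}. Locally on a chart $U$ we have $d\modulus(\omega)=\absolute{g_U}\,d\nu_{\can}$, where $g_U$ is polynomial and $\omega$ is regular, so the density $\norm{\omega}$ is bounded and continuous on the compact space $\mathfrak{X}(O_K)$. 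It is not globally the absolute value of one analytic function. However, the proof of \cite[Theorem 5.6]{brad_nbvp} only uses that the density is a bounded, continuous function whose zero set $V(\omega)$ is $\nu_{\can}$-null. I would check each ingredient of that proof against this weaker hypothesis, chart by chart.

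The second step establishes the functional-analytic framework on $L^2(\closure_w\Omega,\modulus(\omega))$. I would verify the following points in order:
\begin{itemize}
\item $\mathcal{E}_{\omega,\closure_w\Omega}$ is a closed, symmetric, Markovian form. This holds because the kernel $d_\Lambda^{-\alpha}$ is symmetric and nonnegative, and the measure is finite.
\item The semigroup is ultracontractive, as in \cite[Example 5.5]{brad_nbvp}. This gives an orthonormal basis of eigenfunctions, the wavelet ones being described by Proposition \ref{smallWaveletEigenvalue}.
\item The kernel of $\mathcal{E}$ on the connected set $\closure_w\Omega$ consists of the constants.
\end{itemize}
Hence the Green function $G_{\omega,\Omega}$, summed over the nonzero eigenvalues, is the inverse of $\Delta^\alpha_{\omega,\Omega}$ on the orthogonal complement of the constants.

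The third step sets up the weak formulation. A weak solution means
\[
\mathcal{E}_{\omega,\closure_w\Omega}(u,v)=\int_{\delta_w\Omega}\phi\,\overline{v}\,d\modulus(\omega)
\quad\text{for all } v\in\dom(\mathcal{E}).
\]
Splitting the double integral over $\Omega$ and $\delta_w\Omega$ recovers both $\Delta u|_\Omega=0$ and $N u|_{\delta_w\Omega}=\phi$. Since $\phi\in L^\infty\subset L^2$ on a finite measure space, the right-hand side is a bounded functional. The normalisation $\int_{\delta_w\Omega}u\,d\modulus(\omega)=0$ fixes the additive constant. Existence and uniqueness then follow from Lax--Milgram, or from the Riesz representation theorem on the quotient of $\dom(\mathcal{E})$ by the constants, using a Poincaré-type coercivity coming from the spectral gap. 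The representation $u(x)=\int_{\delta_w\Omega}\phi(y)G_{\omega,\Omega}(x,y)\,d\modulus(\omega)(y)$ follows by expanding in the orthonormal basis.

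I expect the spectral gap and coercivity to be the main obstacle, because $\modulus(\omega)$ degenerates on $V(\omega)$ and Proposition \ref{smallWaveletEigenvalue} only covers wavelets supported away from $V(\omega)$. My first attempt would be to bound the eigenvalues from below uniformly. The kernel satisfies $d_\Lambda^{-\alpha}\ge \operatorname{diam}^{-\alpha}>0$ on the compact set, so $\mathcal{E}(u,u)\ge c\norm{u-\bar u}^2_{L^2(\modulus(\omega))}$, where $\bar u$ is the mean of $u$. This argument works directly for any finite measure and sidesteps the zeros of $\omega$. A second, smaller issue is the compatibility condition $\int_{\delta_w\Omega}\phi\,d\modulus(\omega)=0$ (test with $v=1$), which must be assumed or enforced, as in \cite{brad_nbvp}.
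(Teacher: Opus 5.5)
Your route is the paper's. Its proof is the single sentence that the proof of \cite[Theorem 5.6]{brad_nbvp} carries over, and your observation that only a bounded continuous density $\norm{\omega}$ with $\nu_{\can}$-null zero set is used is what justifies that transfer. Your explicit variational argument improves on the bare citation. The bound $d_\Lambda^{-\alpha}\ge(\operatorname{diam})^{-\alpha}$ gives a Poincar\'e inequality for any finite measure, so existence and uniqueness modulo constants need no spectral information near $V(\omega)$, where Proposition \ref{smallWaveletEigenvalue} gives none. You also rightly note that the compatibility condition $\int_{\delta_w\Omega}\phi\,d\modulus(\omega)=0$ is missing from the statement. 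However, two of the steps you fill in do not hold as written.

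First, the variational equation for $\mathcal{E}_{\omega,\closure_w\Omega}$ is not the stated NBVP. The double integral runs over all of $\closure_w\Omega\times\closure_w\Omega$, so symmetry of $w$ gives $\mathcal{E}_{\omega,\closure_w\Omega}(u,v)=2\int_{\closure_w\Omega}v(x)\,\overline{Lu(x)}\,d\modulus(\omega)(x)$ with $Lu(x)=\int_{\closure_w\Omega}w(x,y)(u(x)-u(y))\,d\modulus(\omega)(y)$. For $x\in\delta_w\Omega$ this is
\[
Lu(x)=N_{\omega,\delta_w\Omega}u(x)+\int_{\delta_w\Omega}w(x,y)\bigl(u(x)-u(y)\bigr)\,d\modulus(\omega)(y).
\]
The last term does not vanish, because $w>0$ everywhere and $\delta_w\Omega$ has positive measure (it contains $B_k(a)$ in Section 6). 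So testing with $v$ supported in $\delta_w\Omega$ identifies $Lu$, not $N_{\omega,\delta_w\Omega}u$, with $\phi$ (up to normalising constants). To get the NBVP with $N$ as defined, you must remove $\delta_w\Omega\times\delta_w\Omega$ from the energy. The generator is then $\Delta^\alpha_{\omega,\Omega}$ on $\Omega$ and $N_{\omega,\delta_w\Omega}$ on $\delta_w\Omega$, matching the weak formulation (\ref{NBVP_Weierstrass}). The Green function must then come from this generator. Your Poincar\'e step also needs one more line: for $x,y\in\delta_w\Omega$ use $\absolute{u(x)-u(y)}^2\le2\absolute{u(x)-u(z)}^2+2\absolute{u(z)-u(y)}^2$ and average over $z\in\Omega$, which requires $\modulus(\omega)(\Omega)>0$. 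The alternative is to keep the full form and state that the boundary operator you solve for is $L$. That is what $G_{\omega,\Omega}$, built from $\Delta^\alpha_{\omega,\Omega}$, actually inverts.

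Second, the two normalisations are incompatible in general. Every eigenfunction with nonzero eigenvalue is orthogonal to the constants in $L^2(\closure_w\Omega,\modulus(\omega))$. Hence $u=\int_{\delta_w\Omega}\phi(y)G_{\omega,\Omega}(\cdot,y)\,d\modulus(\omega)(y)$ has mean zero over $\closure_w\Omega$, so $\int_{\delta_w\Omega}u\,d\modulus(\omega)=-\int_\Omega u\,d\modulus(\omega)$, which need not vanish. Consider the three-point analogue $\Omega=\mathset{o}$, $\delta_w\Omega=\mathset{d_1,d_2}$ with unit masses, $w(o,d_1)=1$, $w(o,d_2)=2$, $w(d_1,d_2)=0$, and $\phi(d_1)=1$, $\phi(d_2)=-1$. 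The Green solution is $(u(o),u(d_1),u(d_2))=(-\tfrac16,\tfrac56,-\tfrac23)$, whose boundary mean is $\tfrac16$. So ``fixing the additive constant'' by the boundary normalisation and ``expanding in the orthonormal basis'' select different representatives. One of them must be shifted by $\modulus(\omega)(\delta_w\Omega)^{-1}\int_{\delta_w\Omega}u\,d\modulus(\omega)$, and fixing the constant through $\delta_w\Omega$ needs $\modulus(\omega)(\delta_w\Omega)>0$. Finally, $\closure_w\Omega$ is totally disconnected, so connectedness is not the reason the kernel of the form consists of the constants. The reason is that $w>0$ off the diagonal.
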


\begin{proof}
The proof of \cite[Theorem 5.6]{brad_nbvp}
carries over to this case. 
\end{proof}

\section{Searching tasks}

The searching tasks below presume, in particular in the higher-dimensional case, that the simplicial complex structure given by the divisor of the differential form in question is present in order to ``guide'' searchers along the poset structure of its faces, but without (explicitly) revealing one's position in it. In the case of dimension one, treated in the last two subsections, this plays out only in the task of detecting which one of the points one is or is not coming closer to when sampling different $p$-adic balls with wavelets supported on them for using them as boundary conditions of a Neumann Boundary Value Problem. The first subsection deals with detecting the divisor of a pluricanonical form with poles on a scheme of arbitrary dimension, through solving a series of such Boundary Value Problems.
\subsection{Pluricanonical forms}

Let
\[
\omega\in\Gamma\left(\mathfrak{U},\left(\Omega_{\mathfrak{X}/O_K}^n\right)^{\otimes r}\right)
\]
for   some open subscheme $\mathfrak{U}$ of $\mathfrak{X}$,
i.e.\ $\omega$ is an $r$-pluricanonical form on $\mathfrak{X}$ with poles. It gives rise to the measure $\modulus(\omega)^{\frac{1}{r}}$ which on a local chart is of the form
\[
d\modulus(\omega)^{\frac{1}{r}}|_U
=\absolute{g_u}^{\frac{1}{r}}\absolute{dx}
\]
for some meromorphic function $g_U$ on $U$.
Assume that $\rho\colon\mathfrak{X}\to\Spec(O_K)$ is proper smooth morphism of schemes and assume that 
\[
\divisor(\omega)=\sum\limits_{i=1}^s a_i\mathfrak{D}_i
\]
as a $\mathds{Q}$-divisor is also a \emph{strictly normal crossing divisor} (SNCD) over $O_K$. The latter means that all (reduced) irreducible pieces $\mathfrak{D}_i$ with $i\in I = \mathset{1,\dots, s}$
are smooth over $O_K$, and for all $x\in\supp(\divisor(\omega))$, the completion of the inclusion map
\[
\supp (\divisor(\omega)) \to\mathfrak{X} 
\]
(where the divisor support is viewed as a reduced closed subscheme) at
$x$ is isomorphic to the inclusion
\[
\Spec\left(\widehat{O}_{S,\rho(x)}[[x_1,\dots,x_n]]/(x_1\cdots x_\ell)\right)\to\Spec\left(\widehat{O}_{S,\rho(x)}[[x_1,\dots,x_n]]\right)
\]
for some $\ell$ with $1\le\ell\le n$, where $S = \Spec(O_K)$, and $n$ is the relative dimension of the structure
morphism $g \colon\mathfrak{X}\to S$. That is a way of saying that, locally in $x \in\supp(\divisor(\omega))$, the local ring
$O_{\mathfrak{X},x}$ is regular, and there exists a regular system of parameters $x_1,\dots,x_n\in\mathfrak{m}_{\mathfrak{X},x}$ such that
$\supp(\divisor(\omega))$ is locally cut out by the equation $x_1\cdots x_\ell = 0$ in $O_{\mathfrak{X},x}$,
cf.\ \cite[Definition 41.21.1]{stacks}.
The passing to the completion in the present case preserves regularity 
\cite[Proposition 2.41]{Liu2002},
and the local presentation of these completed local rings as power series rings in $n$ variables for
Noetherian $O_K$-schemes is well-known in algebraic geometry. 
\newline

A pluricanonical form $\omega$ on $\mathfrak{X}$ with poles yields a measure on $\mathfrak{X}(O_K)$ via
\[
\int_U\modulus(\omega)^{\frac{1}{r}}
=\int_U \absolute{g_U(x)}^{\frac{1}{r}}\absolute{dx}
\]
on a chart $U$ of $\mathfrak{X}(O_K)$, where $\omega$ is given as
\[
\omega|_U(x)=g_U(x)\,(dx_1\wedge\dots\wedge dx_n)^{\otimes r}\,.
\]
Since the structure morphism 
$\rho\colon\mathfrak{X}\to S$ is smooth of relative dimension $n$, it holds true that
\[
\widehat{O}_{\mathfrak{X},x}
\cong \widehat{O}_{S,\rho(x)} [[x_1,\dots,x_n]]
\]
for $x\in\divisor(\omega)$, and for any non-empty subset $J\subset I = \mathset{1,\dots, s}$, the closed subscheme
\[
\mathfrak{D}_J =
\bigcap\limits_{j\in J}\mathfrak{D}_j
\]
of $\mathfrak{X}$ is regular of codimension $\absolute{J}$ in all its points, by the SNCD assumption, cf. \cite[Lemma
41.21.2]{stacks}.
\newline

Define also
\[
\mathfrak{D}_\emptyset=\mathfrak{X}\,,\quad
\mathfrak{D}_J^\circ=\mathfrak{D}_J\setminus\bigcup\limits_{j\in I\setminus J}\mathfrak{D}_j\,.
\]
Then we have a condition for the measure $\modulus(\omega)^{\frac{1}{r}}$ to be of finite value, in which case the volume can be explicitly given: 
\begin{Lemma}
It holds true that
\[
\modulus(\omega)^{\frac{1}{r}}
(\mathfrak{X}(O_K))=\frac{1}{q^n}
\sum\limits_{J\subset I}\absolute{\mathfrak{D}_J^\circ(\mathds{F}_q)}\prod\limits_{j\in J}\frac{q-1}{q^{(a_j/r)+1}-1}
\]
for $a_1,\dots,a_s>-r$.
\end{Lemma}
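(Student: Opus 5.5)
Partition $\mathfrak{X}(O_K)$ by the fibres of the reduction map $\mathfrak{X}(O_K)\to\mathfrak{X}(\mathds{F}_q)$, compute the measure of each fibre, and sum. This is Denef's formula for the $p$-adic volume of a pluricanonical form with SNC divisor (cf.\ Denef--Loeser, Yasuda). Since $\rho$ is proper and smooth, Hensel's lemma makes the reduction map surjective. Every $\bar x\in\mathfrak{X}(\mathds{F}_q)$ lies in exactly one stratum $\mathfrak{D}_J^\circ(\mathds{F}_q)$, so
\[
\modulus(\omega)^{\frac1r}(\mathfrak{X}(O_K))=\sum_{J\subset I}\sum_{\bar x\in\mathfrak{D}_J^\circ(\mathds{F}_q)}\modulus(\omega)^{\frac1r}\bigl(\mathrm{red}^{-1}(\bar x)\bigr)\,.
\]
It therefore suffices to show that each fibre over a point of $\mathfrak{D}_J^\circ(\mathds{F}_q)$ has measure $q^{-n}\prod_{j\in J}\frac{q-1}{q^{(a_j/r)+1}-1}$.

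\textbf{Local coordinates.} Fix $\bar x\in\mathfrak{D}_J^\circ(\mathds{F}_q)$. By the SNCD hypothesis and smoothness of $\rho$, we have $\widehat{O}_{\mathfrak{X},\bar x}\cong O_K[[x_1,\dots,x_n]]$, where $x_j$ ($j\in J$, renumbered as $x_1,\dots,x_{\absolute{J}}$) are local equations of the $\mathfrak{D}_j$. Hensel's lemma and the implicit function theorem then give a $K$-analytic bijection of the fibre $\mathrm{red}^{-1}(\bar x)$ onto $(\mathfrak{p})^n$. Because $x_1,\dots,x_n$ come from a regular system of parameters over $O_K$, the canonical measure corresponds to the Haar measure $\absolute{dx}$ on $\mathfrak{p}^n$, consistent with each fibre having $\nu_{\can}$-volume $q^{-n}$. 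In these coordinates
\[
\omega=u\cdot\prod_{j\in J}x_j^{a_j}\,(dx_1\wedge\dots\wedge dx_n)^{\otimes r},
\]
with $u$ a unit of $\widehat{O}_{\mathfrak{X},\bar x}$, since no other component passes through $\bar x$. Hence $\absolute{u}=1$ on the fibre. Here $a_j$ is the integer order of $\omega$ along $\mathfrak{D}_j$; the $\mathds{Q}$-divisor language refers to $a_j/r$.

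\textbf{Integration.} The measure of the fibre is
\[
\int_{\mathfrak{p}^n}\prod_{j\in J}\absolute{x_j}^{a_j/r}\absolute{dx}
= q^{-(n-\absolute{J})}\prod_{j\in J}\int_{\mathfrak{p}}\absolute{t}^{a_j/r}\absolute{dt}.
\]
Splitting $\mathfrak{p}$ into shells $\pi^k O_K^\times$ with $k\ge1$, each of measure $q^{-k}(1-q^{-1})$, gives for $s=a_j/r>-1$
\[
\int_{\mathfrak{p}}\absolute{t}^{s}\absolute{dt}=(1-q^{-1})\sum_{k\ge1}q^{-k(s+1)}=q^{-1}\frac{q-1}{q^{s+1}-1}.
\]
Convergence of this geometric series is exactly the condition $a_j>-r$. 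Multiplying over $j\in J$ gives $q^{-n}\prod_{j\in J}\frac{q-1}{q^{(a_j/r)+1}-1}$, and summing over strata gives the claimed formula.

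\textbf{Main obstacle.} The main obstacle is justifying the local coordinate step rigorously. One must pass from the formal completion to actual $K$-analytic coordinates on the whole residue disc $\mathrm{red}^{-1}(\bar x)$. This requires choosing algebraic (Zariski-local) lifts $x_1,\dots,x_n\in\mathcal{O}_{\mathfrak{X},\bar x}$ that generate the maximal ideal together with $\pi$. Étaleness of $(x_1,\dots,x_n)\colon\mathfrak{X}\to\mathds{A}^n_{O_K}$ near $\bar x$ then gives, via Hensel's lemma, a measure-preserving bijection of the residue disc with $\mathfrak{p}^n$, as in Oesterlé's computation behind Theorem \ref{OesterleTheorem84}. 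One must also check that $\absolute{u}\equiv1$ there, which holds because $u$ is a unit in the local ring, so its reduction is nonzero at $\bar x$.
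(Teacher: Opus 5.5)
Your proposal is correct and follows essentially the same route as the paper, which simply cites \cite[Proposition 3.5]{Ito2004}. That proof partitions $\mathfrak{X}(O_K)$ into the residue discs of the reduction map $\kappa$, computes each disc's volume in \'etale coordinates as recorded in (\ref{fibreVolume}), and sums over the strata $\mathfrak{D}_J^\circ(\mathds{F}_q)$, exactly as you do; your reading of the $a_j$ as the integer orders of the $r$-pluricanonical form, and your check that $\absolute{u}\equiv 1$ on each disc (no vertical components in $\divisor(\omega)$), are the right details to make it rigorous.
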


\begin{proof}
\cite[Proposition 3.5]{Ito2004}.
\end{proof}

In the proof of \cite[Proposition 3.5]{Ito2004}, we see also the following: let 
\[
\kappa\colon\mathfrak{X}(O_K)\to\mathfrak{X}(\mathds{F}_q)
\]
be the reduction map, and let $\bar{x}\in\mathfrak{X}(\mathds{F}_q)$. Then the fibre volume
\begin{align}\label{fibreVolume}
\modulus(\omega)^{\frac{1}{r}}(\kappa^{-1}(\bar{x}))
=\frac{1}{q^n}\prod\limits_{j\in I\colon\bar{x}\in\mathfrak{D}_j}\frac{q-1}{q^{(a_j/r)+1}-1}
\end{align}
classifies the types of fibres the reduction map $\kappa$ has.
\newline

Each of the following detection tasks on $\mathfrak{X}(O_K)$ are to be addressed via a NBVP methodology:
\begin{enumerate}
\item Detect the closed subschemes $\mathfrak{D}_j$ as well as their orders $a_j\in\mathds{Q}$ which constitute $\divisor(\omega)$.

\item Detect on which class of fibre of the reduction map $\kappa$, given by (\ref{fibreVolume}), a given ball is located.
\end{enumerate}
First, write down the Dirchlet form:
\begin{align*}
\mathcal{E}_{\omega^{1/r},\closure_w\Omega}(u,v)
&=\int{\closure_w\Omega}\int_{\closure_w\Omega}w(x,y)\left(\overline{u(x)}-\overline{v(x)}\right)(v(x)-v(y))
\\
&\qquad\cdot d\modulus(\omega)^{\frac{1}{r}}(y)\,d\modulus(\omega)^{\frac{1}{r}}(x)
\end{align*}
with the by now usual Laplacian integral operator
\[
\Delta_{\omega^{\frac{1}{r}},\Omega}^\alpha u(x)=\int_{\mathfrak{X}(O_K)}w_\Omega(x,y))(u(x)-u(y))\,d\modulus(\omega)^{\frac{1}{r}}(y)
\]
with the kernel function
\[
w_\Omega(x,y)=\begin{cases}
w(x,y),&x,y\in\closure_w\Omega
\\
0,&\text{otherwise},
\end{cases}
\]
with
\[
w(x,y)=d_\Lambda(x,y)^{-\alpha}
\]
for $\alpha>0$, and $\Lambda$ an integral structure compatible with the canonical measure on the compact $p$-adic analytic manifold
$\mathfrak{X}(O_K)$.
\newline

The NBVP here is
\begin{align}\label{NBVP_plurican}
\Delta_{\omega^{1/r},\Omega}u_{\Omega}=0,\quad N_{\omega^{1/r},\delta_w\Omega}u|_{\closure_w\Omega}=\phi\,,
\end{align}
with  weak solution
\[
u(x)=\int_{\delta_w\Omega}\phi_{k,j}(y)G_{\omega^{1/r},\Omega} (x,y)\,d\modulus(\omega)^{\frac{1}{r}}
=\frac{1}{\lambda_{\omega^{1/r},\psi}}\psi_{B(a),j}(x)\in\dom(\mathcal{E}_{\omega^{1/r},\closure_w\Omega_{a,k}})\,,
\]
cf.\ Theorem \ref{NBVP_alg}. Notice that it is applicable, because, accoring to the proof of \cite[Proposition 3.5]{Ito2004}, the measure $d\modulus(\omega)^{\frac{1}{r}}$ has no poles, only zeros. The reason is that $\omega$ is locally in $x=(x_1,\dots,x_n)$ of the form
\[
\omega(x)=h(x)x_1^{a_{j_1}}\cdots x_n^{a_{j_m}}(dx_1\wedge\dots\wedge dx_n)^{\otimes r}
\]
with a $p$-adic unit $h(x)$ on the fibre $\kappa^{-1}(\bar{x})$ with
\[
\bar{x}\in\mathfrak{D}_{\mathset{j_1,\dots,j_m}}\,.
\]
And so,
the measure $\modulus(\omega)^{\frac{1}{r}}$ takes the form
\[
\modulus(\omega)^{\frac{1}{r}}(A)=
\int_{\phi(A)}\absolute{x_1}^{a_{j_1}/r}\cdots\absolute{x_m}^{a_{j_m}/r}\absolute{dx_1}\wedge\dots\wedge\absolute{dx_n}
\]
for $A\subseteq\kappa^{-1}(\bar{x})$, 
assuming that $\mathfrak{D}_{\mathfrak{j_1,\dots,j_m}}$ is given with local $p$-adic coordinates by the system of equations
\[
x_1=\dots=x_m=0\,,
\]
and with $\phi(A)\subseteq\mathfrak{m}^n\subset O_K^n$ for a suitable $O_K$-bi-analytic transformation $\phi$. Hence, as $a_{j_1},\dots,a_{j_m}>-r$, there are no poles, just zeros,  for the measure $\modulus(\omega)^{\frac{1}{r}}$ on $\mathfrak{X}(O_K)$. The proof of Theorem \ref{NBVP_alg} also extends to this case.
\newline

The small wavelet eigenvalue is
\begin{align}\label{smallWaveletEigenvalue_plurican}
\lambda_{\omega^{1/r},\psi}&=\int_{\mathfrak{X}(O_K)\setminus B(a)}
d_\Lambda(a,y)^{-n\alpha}\,d\modulus(\omega)^{\frac{1}{r}}
\\\nonumber
&+\left(\frac{1}{q^{nk}}\prod\limits_{j\in I\colon\bar{a}\in\mathfrak{D}_j}\frac{q-1}{q^{(a_j/r)+1}-1}\right)^{-n\alpha}\left(1-q^{-n}(1+(-1)^n)\right)
\end{align}
for the eigenfunction
$\psi_{B(a),j}(x)$ supported in $\mathfrak{X}(O_K)$ on a sufficiently small ball. Again, the proof of Proposition \ref{smallWaveletEigenvalue} carries over to this case.
\newline

The following specialisation will be chosen:
\begin{align}\label{specialisation_plurican}
\Omega=\Omega_{a,k}&=\mathfrak{X}(O_K)\setminus(B_k(a)\cup \supp(\divisor(\omega))\,,
\\\nonumber
\delta_w\Omega&=B_k(a)\cup\supp(\divisor(\omega))\,,
\\\nonumber
\phi(x)&=\psi_{B(a),j}(x)\,.
\end{align}

\begin{thm}
Solving the NBVP (\ref{NBVP_plurican}) under specialisation (\ref{specialisation_plurican}) weakly for lots of increasing $k\in\mathds{N}$ detects 
the  volume of the fibre $\kappa^{-1}(\bar{a})$, the corresponding orders $a_{j_1},\dots,a_{j_m}$, as well as
the set of possible smallest closed subscheme $\mathfrak{D}_{\mathset{j_1,\dots,j_m}}$ containing $\bar{a}$ for $a\in\mathfrak{X}(O_K)$, and finally the divisor $\divisor(\omega)$.
\end{thm}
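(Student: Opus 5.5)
The approach is to read off the small wavelet eigenvalue (\ref{smallWaveletEigenvalue_plurican}) from the weak solution, and then extract the fibre volume from the way this eigenvalue depends on $k$. By Theorem \ref{NBVP_alg}, extended to $\modulus(\omega)^{\frac{1}{r}}$ as explained before the statement, the NBVP (\ref{NBVP_plurican}) under specialisation (\ref{specialisation_plurican}) has the weak solution
\[
u=\lambda_{\omega^{1/r},\psi}^{-1}\,\psi_{B(a),j}\,.
\]
Since the boundary datum $\phi=\psi_{B(a),j}$ is known, evaluating $u$ at one point of $\supp\psi_{B(a),j}$ yields the eigenvalue $\lambda_{\omega^{1/r},\psi}$ for the ball $B_k(a)$.

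The next step is to separate the two summands of (\ref{smallWaveletEigenvalue_plurican}) by varying $k$.
\begin{itemize}
\item The first summand is an integral over $\mathfrak{X}(O_K)\setminus B_k(a)$. As $k$ grows it changes only by contributions of the shells $B_{k}(a)\setminus B_{k+1}(a)$, and these can be computed from the same local measure.
\item The second summand is $c\,(q^{-nk}V)^{-n\alpha}$ with the explicit constant $c=1-q^{-n}(1+(-1)^n)$. Here $V$ is the fibre volume factor of (\ref{fibreVolume}), namely $\prod_{j\colon \bar a\in\mathfrak{D}_j}\frac{q-1}{q^{(a_j/r)+1}-1}$. This term grows like $q^{n^2\alpha k}$.
\end{itemize}
Taking successive differences $\lambda_{k+1}-\lambda_k$, or the limit of $\lambda_k q^{-n^2\alpha k}$ as $k\to\infty$, isolates $c\,V^{-n\alpha}$. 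This holds because the shell contributions are $O(q^{(n\alpha-1)nk})$, or more precisely are controlled by the local monomial form $\absolute{x_1}^{a_{j_1}/r}\cdots\absolute{x_m}^{a_{j_m}/r}\absolute{dx}$, which is bounded since $a_j>-r$. From this limit one recovers $V$, and hence the fibre volume $q^{-n}V$ of $\kappa^{-1}(\bar a)$.

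To recover the orders, I would sample the chosen ball $B_k(a)$ with centres along suitable directions, namely points at varying distance $\absolute{x_i}=q^{-\ell}$ from the coordinate hyperplanes inside the fibre. Then $\modulus(\omega)^{1/r}(B_k(a))=q^{-nk}\prod_i q^{-\ell_i a_{j_i}/r}$ whenever $B_k(a)$ avoids $\mathfrak{D}$. The exponents $a_{j_i}/r$ are then read off as slopes of $\log_q$ of the detected ball volumes in $\ell_i$, each of which again comes from an eigenvalue as above. Comparing the values $V$ with the finitely many products $\prod_{j\in J}\frac{q-1}{q^{(a_j/r)+1}-1}$ determines the set of possible index sets $J$. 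This gives the smallest stratum $\mathfrak{D}_J$ containing $\bar a$, up to the ambiguity that distinct $J$ may yield equal products, which is why the statement says ``set of possible''.

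Finally, running this over all $\bar a\in\mathfrak{X}(\mathds{F}_q)$, i.e.\ over representatives of all reduction fibres, together with the guiding poset of faces $\mathfrak{D}_J$, assembles $\divisor(\omega)=\sum a_j\mathfrak{D}_j$. The main obstacle is the separation step: rigorously controlling the first summand uniformly in $k$, so that its increments are asymptotically negligible against $q^{n^2\alpha k}$. A related difficulty is keeping the ball chosen small enough to lie in a single fibre while being off the boundary $\supp(\divisor(\omega))$. I would handle this by choosing $\alpha$ large and using the explicit monomial density on each fibre.
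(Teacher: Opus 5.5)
Your first step is the same as the paper's. The paper also reads $\lambda_{\omega^{1/r},\psi}$ off $u=\lambda_{\omega^{1/r},\psi}^{-1}\psi_{B(a),j}$ and, for growing $k$, extracts from the second summand of (\ref{smallWaveletEigenvalue_plurican}) an offset proportional to $\log_q V$. For the orders, however, the paper does not move the centre inside one fibre. It collects the quantities $\log_q V=\sum_{j\in J}\log_q\frac{q-1}{q^{(a_j/r)+1}-1}$ from fibres over many different strata $\mathfrak{D}_J^\circ$ and solves the resulting approximately linear system in $a_1,\dots,a_s$ by a rank argument. It then narrows down $J$ by comparing neighbouring faces.

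Replacing that step with your slope argument creates a genuine gap, because your two steps use incompatible formulas for $\modulus(\omega)^{\frac1r}(B_k(a))$.

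\emph{Step 1} rests on (\ref{smallWaveletEigenvalue_plurican}), i.e.\ on $\modulus(\omega)^{\frac1r}(B_k(a))=q^{-nk}V$. This quantity depends only on $k$ and $\bar a$. With it, moving $a$ inside $\kappa^{-1}(\bar a)$ leaves the dominant term of the eigenvalue unchanged, so step 2 has no slopes in $\ell_i$ to read off.

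\emph{Step 2} instead uses the true monomial density, and that computation is right. If $\absolute{x_i(a)}=q^{-\ell_i}$ and $k>\max_i\ell_i$, the density is constant on $B_k(a)$, and $\modulus(\omega)^{\frac1r}(B_k(a))=q^{-nk}\prod_i\absolute{x_i(a)}^{a_{j_i}/r}$. The fibre factor $V$ does not occur in this expression.

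Consequently, for exactly the balls you insist on (those off $\supp(\divisor(\omega))$), $\lambda_kq^{-n^2\alpha k}$ tends to $c\prod_i\absolute{x_i(a)}^{-n\alpha a_{j_i}/r}$, not to $cV^{-n\alpha}$. So the fibre volume is not detected the way you claim. A ball centred on $\mathfrak{D}_J$ does not help either: its volume is $q^{-nk}\prod_i q^{-(k-1)a_{j_i}/r}\frac{q-1}{q^{(a_{j_i}/r)+1}-1}$. This equals $q^{-nk}V$ only for $k=1$ (the whole fibre) or when $\sum_i a_{j_i}=0$. Whichever volume formula you adopt, one of your two steps fails, and your step 3, which uses $V$ to identify $J$, inherits the problem.

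There are two consistent ways to repair this:
\begin{enumerate}
\item Keep (\ref{smallWaveletEigenvalue_plurican}) and obtain the orders as the paper does, by comparing fibre offsets across different strata.
\item Keep the true density, determine the exponents first by your slope argument, and only then compute the fibre volume from (\ref{fibreVolume}) once you know which exponents occur at $\bar a$.
\end{enumerate}
Separately, note that $\absolute{x_1}^{a_{j_1}/r}$ is integrable but not bounded when $a_{j_1}<0$, so your justification for neglecting the first summand should invoke integrability, not boundedness.
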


\begin{proof}
Since the eigenvalue $\lambda_{\omega^{1/r},\psi}$ is of the order of the second summand in (\ref{smallWaveletEigenvalue_plurican}), it follows that sampling weak solutions $u$ of (\ref{NBVP_plurican}) under specialisation (\ref{specialisation_plurican}) for increasing sufficiently large $k\in\mathds{N}$, and then taking the logarithm of $\norm{u}_\infty$  with basis $q$ allows to estimate the offset
\[
n\alpha\sum\limits_{i=1}^m
\left(\log_q(q^{a_{j_1}+1}-1)-\log_q(q-1)\right)
\]
which immediately yields the fibre volume.

\smallskip
Taking enough samples as above for $a\in\mathfrak{X}(O_K)$ in sufficiently many ``areas'' of the manifold $\mathfrak{X}(O_K)$ yields a system of ``almost'' linear equations containing the $a_i$'s on the left hand side, and the ball volume values on the right hand side. The number of different equations possible is $2^s$, for the parameters $a_1,\dots,a_s$.
Since the maximal possible rank of this approximately linear system is the number of distinct values of the $a_i$'s, taking enough samples of $a$ with coverage of enough faces of the intersection complex associated with $\divisor(\omega)$, yields a set of approximate linear equations with this rank. This yields the values of $a_1,\dots,a_s\in\mathds{Q}$.

\smallskip
Having established the knowledge of the $a_i$'s, the question of where on the intersection complex a given ball $B_k(a)$ is taken from, can now be answered as follows: The  volume of $B_k(a)$ can now be obtained with various sets of $a_{j_1},\dots,a_{j_m}$ with sets $\mathset{j_1,\dots,j_m}$ of varying size.
Collect all of these sets into a set $K_0$. Now, sample a ball of equal size from a neighbouring face in the intersection complex. This amounts to either adding an $a_i$ to, or subtracting an $a_i$ to these sets. Collect all the new possible subsets of $I$ in a new set $K_1$, and discard 
from $K_0$ all sets which do not have a counterpart in $K_1$ be either adding or removing an element. Do the same with $K_1$. Now sample a ball of equal radius from a neighbouring face to  the previously sampled ball, and collect all the possible subsets of $I$ in $K_{n+1}$. Remove from $K_n$ any set which does not have a match in $K_{n+1}$ by adding or removing an element. Likewise with $K_{n+1}$. From there backtrack all previous sets $K_0,\dots,K_n$ by removing items not forming edit-neighbour pairs. Eventually, $K_0$ has a minimal size. We need to prove that the minimal size of $1$ is attainable, and this then yields the position of the initially sampled ball $B_k(a)$ in the intersection complex.

\smallskip
The divisor $\divisor(\omega)$ is built up by attaching to the  known values of $a_1,\dots,a_s$ the fibres of all points of the reduction $\mathfrak{D}(\mathds{F}_q)$, taking into account the faces of the intersection complex they are attached to. These are not known per se, but two fibres belonging to the same face can be reached by a trivial path on that simplicial complex. 
\end{proof}

\subsection{Weierstrass points on projective algebraic curves}

Let $\mathfrak{X}$ be a projective algebraic curve over $O_K$ with $g\ge2$ its genus. For each point in $\mathfrak{X}(O_K)$ take a local coordinate $z$, and let $\phi_1(z)dz,\dots,\phi_g(z)dz$ be a local presentation of a basis of algebraic differential $1$-forms on $\mathfrak{X}$. The  following determinant: 
\[
W(z)=\det
\begin{pmatrix}
\phi_1(z)&\phi_1'(z)&\dots&\phi_1^{(g)}(z)
\\
\vdots&\ddots&\dots&\vdots
\\
\phi_g(z)&\phi_g'(z)&\dots&\phi_g^{(g)}
\end{pmatrix}
\]
is the \emph{Wronskian} of $\mathset{\phi_1(z),\dots,\phi_g(z)}$ in a local neighbourhood of $x$. The order of vanishing of a point $x$ in the zero set of $W(z)$ is called the  \emph{weight} of $x$.
This is important for characterising \emph{Weierstrass points} of the curve $\mathfrak{X}$. These are points $P\in\mathfrak{X}$ for which
\[
\dim_K O_{\mathfrak{X}}(gP)\ge 2\,,
\]
i.e.\ for which there exists a meromorphic function on $\mathfrak{X}$ having a unique pole in $P$ of order at most $g$.

\begin{Lemma}
A point $x\in\mathfrak{X}(O_K)$is a  Weierstrass point
if and only if the Wronskian $W(z)$ 
in a local neighbourhood of $x\in\mathfrak{X}(O_K)$ 
vanishes at $x$.
\end{Lemma}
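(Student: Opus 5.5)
The plan is to reduce the statement to Riemann--Roch on the generic fibre $X$ of $\mathfrak{X}$, plus a linear-algebra reformulation of vanishing orders of differentials. A point $x\in\mathfrak{X}(O_K)$ defines a $K$-rational point $P$ of the smooth projective curve $X$, so both conditions in the Lemma live on $X$. Choose the local coordinate $z$ to be a uniformiser at $P$, so that $P$ corresponds to $z=0$.

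The first step is to apply Riemann--Roch to the divisor $gP$:
\[
\dim_K \mathcal{O}_X(gP) = g+1-g+\dim_K \Gamma\bigl(X,\Omega^1_X(-gP)\bigr) = 1+\dim_K\Gamma\bigl(X,\Omega^1_X(-gP)\bigr)\,.
\]
Hence $P$ is a Weierstrass point if and only if some nonzero regular differential vanishes at $P$ to order at least $g$.

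The second step translates this into linear algebra. Every regular differential is uniquely $\sum_{i=1}^g c_i\phi_i(z)\,dz$ with $c\in K^g$. It vanishes to order at least $g$ at $z=0$ exactly when the first $g$ Taylor coefficients of $\sum_i c_i\phi_i(z)$ vanish, that is,
\[
\sum_i c_i\phi_i^{(m)}(0)=0\quad\text{for } m=0,\dots,g-1\,.
\]
A nonzero solution $c$ exists if and only if the $g\times g$ matrix $\bigl(\phi_i^{(m)}(0)\bigr)_{i,\,0\le m\le g-1}$ is singular. That determinant is $W(0)$, provided the Wronskian is read with derivative orders $0,\dots,g-1$, so that the matrix is square. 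I would state this correction explicitly, since the displayed matrix runs up to $\phi^{(g)}$ and then is not square.

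The third step checks that the criterion does not depend on the choices made. Changing the basis of differentials multiplies $W$ by a nonzero constant. Changing the coordinate $z$ multiplies $W$ by a power of the Jacobian $dz/dz'$, which is a unit near $P$. Either way, the condition $W(P)=0$ is unchanged. Since the analytic structure on $\mathfrak{X}(O_K)$ is induced from the algebraic one, the local analytic expansion of $\phi_i$ near $x$ agrees with the algebraic power-series expansion in $\widehat{\mathcal{O}}_{X,P}\cong K[[z]]$. Therefore the analytic Wronskian on a neighbourhood of $x$ is the same object as the algebraic one.

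The main obstacle is the characteristic of $K$. If $\operatorname{char}K=p>0$, ordinary derivatives $\phi^{(m)}$ fail to detect Taylor coefficients once $m\ge p$, because $m!$ may vanish in $K$. In that case I would replace $\phi^{(m)}$ by Hasse derivatives $D^{(m)}\phi$, which are exactly the Taylor coefficients, so the second step goes through verbatim. In characteristic $0$ the two notions differ only by the nonzero factors $m!$, which rescale the columns of the matrix and do not affect whether it is singular.
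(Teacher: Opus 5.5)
Your proposal is correct. The paper gives no argument of its own at this point: it simply cites \cite[Proposition 11.1]{KLP2018}, a result stated for compact Riemann surfaces. You instead write out the standard proof behind such statements. Riemann--Roch turns $\dim_K \mathcal{O}_X(gP)\ge 2$ into the existence of a nonzero regular differential vanishing to order at least $g$ at $P$, and that condition is exactly the singularity of a $g\times g$ matrix of Taylor coefficients.

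The citation buys brevity. Your route buys a proof that fits the paper's own setting, and it makes explicit three points the bare citation passes over.

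First, the displayed matrix uses derivative orders $0,\dots,g$, so it is $g\times(g+1)$ and has no determinant; orders $0,\dots,g-1$ are the right ones, as you say.

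Second, the reference is complex-analytic. You justify the transfer to $K$-analytic charts on $\mathfrak{X}(O_K)$ by identifying analytic expansions with images in $\widehat{\mathcal{O}}_{X,P}\cong K[[z]]$.

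Third, a non-archimedean local field may be $\mathds{F}_q((t))$. There all ordinary derivatives of order $\ge p$ of a power series vanish, so for $g>p$ the ordinary Wronskian is identically zero. Yet, for instance, hyperelliptic curves in odd characteristic have only finitely many Weierstrass points. So your passage to Hasse derivatives is needed for the lemma to be true in that case; it is not a cosmetic refinement.
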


\begin{proof}
\cite[Proposition 11.1]{KLP2018}.
\end{proof}

The Wronskian characterisation  leads to a direct application of closeness to zero Theorem \cite[Theorem 6.3]{brad_nbvp}. Namely, let
\[
d\nu_W(z)=\absolute{W(z)}d\nu_{\can}(z)
\]
be a measure on $\mathfrak{X}(O_K)$ which incorporates the Wronskian locally in $x\in\mathfrak{X}(O_K)$.
The Dirichlet form:

\begin{align*}
\mathcal{E}_{W,\closure_w\Omega}(u,v)
&=\frac12\int_{\closure_w\Omega}\int_{\closure_w\Omega}w(x,y)\left(\overline{u(x)}-\overline{u(y)}\right)(v(x)-v(y))
\,d\nu_W(y)\,d\nu_W(x)
\end{align*}
is associated with the following Laplacian integral operator:
\[
\Delta_{W,\Omega}^\alpha
u(x)=\int_{\mathfrak{X}(O_K)}
w_\Omega(x,y)(u(x)-u(y))\,d\nu_W(y)
\]
with kernel function
\[
w\colon\mathfrak{X}(O_K)\times\mathfrak{X}(O_K)\to\mathds{R},\;
(x,y)\mapsto
d_\Lambda(x,y)^{-\alpha}
\]
for $\alpha>0$.
\newline

Let us formulate a NBVP suitable for detecting Weierstrass points on $\mathfrak{X}(O_K)$: let
$\phi\in L^2(\delta_w\Omega,\nu_{W})$. Then $u\in\dom(\mathcal{E}_{W,\closure_w\Omega})$ is a weak solution of the NBVP (\ref{NBVP_Weierstrass}), if
\begin{align}\label{NBVP_Weierstrass}
\langle \Delta_{W,\Omega}^\alpha u,v\rangle_{L^2}=0,\quad\langle N_{W,\delta_w\Omega} u,v\rangle_{L^2}=\langle\phi,v\rangle_{L^2}
\end{align}
for all $v\in\dom(\mathcal{E}_{W,\closure_w\Omega})$ supported in $\Omega$ and in $\delta_w\Omega$, respectively. Recall the condition 
\begin{align}\label{boundaryMean}
\int_{\delta_w\Omega}\phi\,d\nu_W=0
\end{align}
necessary for the existence of a weak solution, cf.\ \cite[Proposition 4.3]{brad_nbvp}. This allows for using the specialised choices:
\begin{align*}
\Omega=\Omega_{a,k}&=\mathfrak{X}(O_K)\setminus (B_k(a)\cup V(W),&B_k(a)\subset \mathfrak{X}(O_K)\setminus V(W)\,,
\\
\delta_w\Omega&=B_k(a)\cup V(W)\,,
\\
\phi(x)&=\psi_{B(a),j}(x)\,,
\end{align*}
where the latter is a wavelet on $\mathfrak{X}(O_K)$ supported in $B_k(a)$, and having parameter $j\in\mathds{F}_q$. We assume that the support $B(a)$ of $\phi$ is small. In this case, the necessary condition (\ref{boundaryMean}) is satisfied, cf.\ the proof of \cite[Propositon 6.1]{brad_nbvp}.

\begin{thm}[Closeness to Weierstrass Points]\label{close2Weierstrass}
The norm $\norm{u}_\infty$ of the weak solution $u \in \dom(\mathcal{E}_{W,\closure_w\Omega})$ of the
Neumann boundary value problem (\ref{NBVP_Weierstrass}) informs for varying $a\in \mathfrak{X}(O_K)\setminus V(W)$ about whether or
not $a$ is approaching some Weierstrass point of $\mathfrak{X}(O_K)$
 of weight $r > 0$.
\end{thm}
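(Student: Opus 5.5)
The plan is to reduce the statement to the Closeness-to-Zero Theorem \cite[Theorem 6.3]{brad_nbvp}, taking $\absolute{W}$, locally around each point of $\mathfrak{X}(O_K)$, as the regular analytic function $g$ in the measure $\nu_g$ of \cite{brad_nbvp}. First I check that the NBVP (\ref{NBVP_Weierstrass}) fits Theorem \ref{NBVP_alg}. On a chart $U$ the measure $\nu_W$ is $\absolute{W(z)}\,d\nu_{\can}(z)$ with $W$ a polynomial in the local coordinate. So the strong continuity arguments of Lemma \ref{RNderOmega} and Proposition \ref{RN_genEtale} apply verbatim, and $\nu_W$ has zeros only on the finite set $V(W)$, with no poles. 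Since $B_k(a)$ avoids $V(W)$ and $\phi=\psi_{B(a),j}$ has mean zero (so (\ref{boundaryMean}) holds), Theorem \ref{NBVP_alg}, with $\modulus(\omega)$ replaced by $\nu_W$, gives a unique weak solution
\[
u(x)=\int_{\delta_w\Omega}\psi_{B(a),j}(y)G_{W,\Omega}(x,y)\,d\nu_W(y)=\lambda_{W,\psi}^{-1}\psi_{B(a),j}(x).
\]
The second equality holds because $\psi_{B(a),j}$ is an eigenfunction. This eigenfunction property is Proposition \ref{smallWaveletEigenvalue} with $d\modulus(\omega)$ replaced by $\absolute{W}\,d\nu_{\can}$. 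Hence $\norm{u}_\infty=\lambda_{W,\psi}^{-1}\norm{\psi_{B(a),j}}_\infty$.

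Second, I evaluate $\lambda_{W,\psi}$ as $a$ varies. By Proposition \ref{smallWaveletEigenvalue} it is a tail integral plus $\nu_W(B_k(a))^{-\alpha}$ times the constant $1-q^{-1}(1+(-1)^1)$, which is nonzero. The wavelet normalisation $\norm{\psi}_\infty=\nu_W(B_k(a))^{-1/2}$ also enters. Now let $x_0$ be a Weierstrass point of weight $r$, i.e.\ $W(z)=z^r h(z)$ with $h$ a unit near $x_0$ in a local coordinate $z$ centred at $x_0$. If $\absolute{z(a)}=q^{-m}$ with $m<k$, then $\absolute{W}$ is constant on $B_k(a)$, equal to $\absolute{z(a)}^r$. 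This gives
\[
\nu_W(B_k(a))=q^{-mr}\,\nu_{\can}(B_k(a)).
\]
Away from $V(W)$, by contrast, $\absolute{W}$ is bounded below by a positive constant on the complement of a fixed neighbourhood of $V(W)$. So for fixed $k$ the quantity $\log_q\norm{u}_\infty$ depends on $a$ through an offset linear in $m$, with slope determined by $r$ and $\alpha$, exactly when $a$ approaches $x_0$. It stays bounded when $a$ stays away from $V(W)$. This is the dichotomy the theorem asserts, and the slope also reads off the weight $r$. The Lemma characterising Weierstrass points via vanishing of $W$ (\cite[Proposition 11.1]{KLP2018}) then identifies the zeros that are detected as exactly the Weierstrass points.

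The main obstacle is controlling the tail term
\[
\int_{\closure_w\Omega\setminus B_k(a)}d_\Lambda(a,y)^{-\alpha}\,d\nu_W(y).
\]
I must show that it does not swamp the ball term or create a spurious trend as $a\to x_0$. I would split it into the annuli $\{\absolute{z(y)}=q^{-l}\}$ around $x_0$, on each of which both $d_\Lambda(a,\cdot)$ and $\absolute{W}$ are constant, plus a remainder where $\absolute{W}$ is bounded. This gives a geometric-series bound that stays uniformly bounded, or varies at most by a bounded factor, in $m$ for $\alpha$ in a suitable range; I would impose such a range on $\alpha$ if needed. The point to be careful about is that the change of $\lambda_{W,\psi}$ along $m$ comes from the factor $\nu_W(B_k(a))^{-\alpha}$ and the normalisation, not from the tail. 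With that in place, the argument is the one of \cite[Theorem 6.3]{brad_nbvp} with $g$ replaced by $W$.
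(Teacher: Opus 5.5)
Your proposal takes the paper's route. The paper's proof is just the one-line remark that the statement is an application of the Closeness-to-Zero Theorem \cite[Theorem 6.3]{brad_nbvp}, with $\nu_W=\absolute{W}\,d\nu_{\can}$ in the role of $\nu_g$. Your extra steps (the eigenfunction property of small wavelets, the local form $W=z^rh$ near a Weierstrass point, and the control of the tail integral) only unpack the hypotheses and mechanism of that cited theorem.
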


\begin{proof}
This is an application of \cite[Theorem 6.3]{brad_nbvp}.
\end{proof}

\subsection{Weierstrass points on hyperelliptic curves}

A hyperelliptic  curve is endowed with a covering map of degree 2:
\[
\iota\colon\mathfrak{X}\to\mathds{P}^1_K
\]
given by the hyperelliptic involution.
\newline

From the algebraic Radon-Nikodym theory in Theorem \ref{RN_divisor}, we have
\[
d\modulus(\omega_{\mathfrak{X}})=\frac{d\modulus(\omega_{\mathfrak{X}})}{d(\iota^*\nu_{\mathds{P}^1,\can})}\,d\left(\iota^*\nu_{\mathds{P}^1,\can}\right)
=\frac{\norm{\omega_{\mathfrak{X}}}}{\norm{\iota^*\nu_{\mathds{P}^1,\can}}}\,d\left(\iota^*\nu_{\mathds{P}^1,\can}\right)\,,
\]
where $\nu_{\mathds{P}^1,\can}$ is the canonical  measure on $\mathds{P}^1(O_K)$.
Notice that, according to  Corollary \ref{RN_divisor}, Statement 2., that the zeros of the Radon-Nikodym derivative
\[
F=\frac{d\modulus(\omega_{\mathfrak{X}})}{d\modulus(\iota^*\nu_{\mathds{P}^1,\can})}=\frac{\norm{\omega_{\mathfrak{X}}}}{\norm{\iota^*\nu_{\mathds{P}^1,\can}}}
\]
coincide with the ramification points of the covering map $\iota$. In particular, this function is without poles on the curve $\mathfrak{X}(O_K)$.
\newline

The Dirichlet form
\begin{align*}
\mathcal{E}_{\iota,\Omega}(u,v)
&=\frac12\int_{\closure_w\Omega}\int_{\closure_w\Omega}w(x,y)\left(\overline{u(x)}-\overline{u(y)}\right)(v(x)-v(y))
\, d\nu_{\mathds{P}^1,\can}(\iota(y))\,d\nu_{\mathds{P}^1,\can}(\iota(x))
\end{align*}
for $\Omega\subseteq \mathfrak{X}(O_K)$ open comes from the integral Laplacian
\[
\Delta_{\iota,\Omega}u(x)=\int_{\mathfrak{X}(O_K)}w_\Omega(x,y)(u(x)-u(y))\,d\nu_{\mathds{P}^1,\can}(\iota(y))
\]
and gives rise to the \emph{hyperelliptic NBVP}: 
\begin{align}\label{NBVP_iota}
\langle\Delta_{\iota,\Omega} u,v\rangle_{L^2\left(\Omega,\iota^*\nu_{\mathds{P
}^1,\can}\right)}=0,\quad \langle N_{\iota^*dz,\delta_w\Omega} u,v\rangle_{L^2\left(\delta_w\Omega,\iota^*\nu_{\mathds{P}^1,\can}\right)}=\langle\phi,v\rangle_{L^2}
\end{align}
with $\phi\in L^2\left(\closure_w\Omega,\iota^*\nu_{\mathds{P}^1,\can}\right)$, and weak solution $u\in\dom(\mathcal{E}_{\iota,\Omega})$.
Here, the specialised choices have this form:
\begin{align*}
\Omega=\Omega_{a,k}&=\mathfrak{X}(O_K)\setminus (V(F)\cup B_k(a))\,,&B_k(a)\subset\mathfrak{X}(O_K)\setminus V(F)\,,
\\
\delta_w\Omega&=B_k(a)\cup V(F)\,,
\\
\phi(x)&=\psi_{B(a),j}(x)
\end{align*}
with $j\in\mathds{F}_q$ the wavelet parameter.
\begin{thm}[Closeness to Hyperelliptic Weierstrass Points]
The norm $\norm{u}_\infty$ of the solution $u\in\dom(\mathcal{E}_{\iota,\Omega})$ of the Neumann boundary value problem (\ref{NBVP_iota}) informs for varying $a\in\mathfrak{X}(O_K)\setminus V(F)$ about whether or not $a$ is approaching a Weierstrass point of the hyperelliptic curve $\mathfrak{X}(O_K)$ of weight $r>0$.
\end{thm}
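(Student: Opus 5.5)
The plan is to reduce the statement to the Closeness-to-Weierstrass-Points Theorem \ref{close2Weierstrass}, and through it to \cite[Theorem 6.3]{brad_nbvp}. The reduction goes through the Radon-Nikodym description of Theorem \ref{RN_divisor}.

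\textbf{Step 1: the relevant zero set.} For a hyperelliptic curve, the Weierstrass points are exactly the $2g+2$ fixed points of the hyperelliptic involution, i.e.\ the ramification points of $\iota\colon\mathfrak{X}\to\mathds{P}^1_K$. By Theorem \ref{RN_divisor}, Statements 2 and 3, the Radon-Nikodym derivative $F$ is regular on $\mathfrak{X}(O_K)$ and $\divisor(F)=R$. Since $\iota$ has degree $2$, each ramification point enters $R$ with multiplicity one, provided the residue characteristic is not $2$, so that ramification is tame. Hence $V(F)$ is exactly the set of $O_K$-rational Weierstrass points. At each of these, the Wronskian has positive weight $r$; for hyperelliptic curves this weight is $g(g-1)/2$.

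\textbf{Step 2: well-posedness.} The hyperelliptic NBVP (\ref{NBVP_iota}) is built with the pullback measure $\iota^*\nu_{\mathds{P}^1,\can}$. By Proposition \ref{RN_genEtale} this measure is strongly continuous with respect to $\nu_{\mathfrak{X},\can}$. Locally it has density $\absolute{\det T_y\iota}$, which equals $\absolute{z}$ near a ramification point in a uniformiser $z$.

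Because of this, the proofs of Proposition \ref{smallWaveletEigenvalue} and Theorem \ref{NBVP_alg} carry over verbatim. The wavelet $\psi_{B(a),j}$ on a small ball $B_k(a)$ away from $V(F)$ has mean zero, so the boundary condition (\ref{boundaryMean}) holds. The unique weak solution is therefore
\[
u=\lambda_{\iota,\psi}^{-1}\,\psi_{B(a),j}\,,
\qquad\text{so}\qquad
\norm{u}_\infty=\lambda_{\iota,\psi}^{-1}\norm{\psi_{B(a),j}}_\infty\,.
\]

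\textbf{Step 3: reading off the norm.} The remaining work is the eigenvalue asymptotics, as in \cite[Theorem 6.3]{brad_nbvp}. The second summand of $\lambda_{\iota,\psi}$ is a power of the ball mass $\iota^*\nu_{\mathds{P}^1,\can}(B_k(a))$. That mass behaves like $q^{-k}\absolute{F(a)}$-type factors, namely $\absolute{z(a)}q^{-k}$ in a chart around a Weierstrass point.

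So as $a$ approaches a point of $V(F)$, with $k$ fixed relative to the distance, the ball mass decays by an extra factor of $\absolute{z(a)}$. This makes $\lambda_{\iota,\psi}$ grow and $\norm{u}_\infty$ drop relative to balls in the generic region. Comparing $\log_q\norm{u}_\infty$ along a sequence of balls approaching a point then detects that point.

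Alternatively, one can use Step 1 to observe that the local density $\absolute{F}$ and the Wronskian $\absolute{W}$ vanish on the same set. The detection question is then the same as in Theorem \ref{close2Weierstrass}, with the measure $\nu_W$ replaced by the RN-related measure; \cite[Theorem 6.3]{brad_nbvp} only needs a density that is locally the absolute value of an analytic function vanishing exactly on the target set.

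\textbf{Main obstacle.} I expect the hardest part to be controlling the first summand (the integral over the complement of $B(a)$). It must not mask the blow-up coming from the second summand, uniformly as $a$ approaches $V(F)$. Note that $V(F)$ is removed from $\Omega$, so the kernel $d_\Lambda(a,y)^{-\alpha}$ integrated near $V(F)$ must be shown to grow more slowly. My first attempt would be to split $\mathfrak{X}(O_K)\setminus B(a)$ into spheres around $a$ and use $\absolute{z}$-weighted volumes. This should give a bound of order $d_\Lambda(a,V(F))^{1-\alpha}$, which is dominated by the second summand for small balls.
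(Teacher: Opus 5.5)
Your route is the paper's: identify the hyperelliptic Weierstrass points with the ramification points of $\iota$, identify those with the zero set of a density, and invoke Closeness to Zeros \cite[Theorem 6.3]{brad_nbvp}. Your Steps 2--3, including the ``main obstacle'', only unpack what that cited theorem supplies. \textbf{The gap.} The step that fails is the middle identification, which the paper's proof also makes: that Theorem \ref{RN_divisor}(2),(3) makes $F$ regular with $\divisor(F)=R$, so that $V(F)$ is the Weierstrass locus. The relation $K_{\mathfrak{Y}}\cong f^*K_{\mathfrak{X}}+R$ says that the \emph{pulled-back} form vanishes along $R$, i.e.\ $\divisor(f^*\omega_{\mathfrak{X}})=f^*\divisor(\omega_{\mathfrak{X}})+R$. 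So the ratio $\omega_{\mathfrak{Y}}/f^*\omega_{\mathfrak{X}}$, whose absolute value gives $d\modulus(\omega_{\mathfrak{Y}})/d\modulus(f^*\omega_{\mathfrak{X}})$, has divisor $\divisor(\omega_{\mathfrak{Y}})-f^*\divisor(\omega_{\mathfrak{X}})-R$. When both forms are local generators it has poles exactly along $R$; it is the reciprocal $d\modulus(f^*\omega_{\mathfrak{X}})/d\modulus(\omega_{\mathfrak{Y}})$ whose divisor is $R$. Here the gauge form on each chart of $\mathds{P}^1(O_K)$ is a local generator, so on $\mathfrak{X}(O_K)$ one gets $\divisor(F)=\divisor(\omega_{\mathfrak{X}})-R$.

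Concretely, take $y^2=h(x)$, $\iota=x$, $\omega_{\mathfrak{X}}=dx/y$, and a Weierstrass point $(e,0)$ with $h(e)=0$. The uniformiser there is $y$ and $dx=(2y/h'(e)+\cdots)\,dy$, so $\norm{dx}\asymp\absolute{y}$ while $\norm{dx/y}$ is a nonzero constant nearby. Hence $F=\absolute{y}^{-1}$ blows up at the Weierstrass point. Your Step 3 shows the inconsistency openly. You correctly compute the density of $\iota^*\nu_{\mathds{P}^1,\can}$ with respect to $\nu_{\mathfrak{X},\can}$ as $J_\iota:=\absolute{\det T_y\iota}\asymp\absolute{z}$. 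But you then call the ball mass ``$q^{-k}\absolute{F(a)}$-type'', although $F=\norm{\omega_{\mathfrak{X}}}/J_\iota$ up to a constant, i.e.\ essentially the reciprocal of that density. For the same reason, your ``Alternatively'' claim that $\absolute{F}$ and $\absolute{W}$ vanish on the same set is false.

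\textbf{The repair.} Bypass $F$ altogether, which is in effect what your Steps 2--3 already do. The measure in (\ref{NBVP_iota}) is $J_\iota\,d\nu_{\mathfrak{X},\can}$. Here $J_\iota$ is locally the absolute value of the derivative of $\iota$ in \'etale coordinates, hence of an analytic function. Its zero set $V(J_\iota)$ is exactly the set of ramification points of $\iota$ in $\mathfrak{X}(O_K)$, i.e.\ the $O_K$-rational Weierstrass points. Replace $V(F)$ by $V(J_\iota)$ in the specialisation of $\Omega_{a,k}$ and $\delta_w\Omega$. Then $J_\iota$ is constant on sufficiently small balls avoiding $V(J_\iota)$, so the wavelets there are mean-zero eigenfunctions as you say. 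Now \cite[Theorem 6.3]{brad_nbvp} applies to the density $J_\iota$ directly. With that correction your argument becomes a valid and more explicit version of the paper's. As written, however, Step 1 and the identification in Step 3 rest on a sign-reversed reading of the ramification divisor.
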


Even if one can view this as a Corollary of Theorem \ref{close2Weierstrass}, the proof here is going to use the hyperelliptic cover $\iota\colon\mathfrak{X}\to\mathds{P}^1_K$.

\begin{proof}
The Weierstrass points of a hyperelliptic curve are given by the zeros of the ramification divisor of the map $\iota$. This ramification divisor which coincides with the  zeros of the Radon-Nikodym derivative
\[
\frac{d\modulus(\omega_{\mathfrak{X}})}{d\left(\iota^*\nu_{\mathds{P}^1,\can}\right)}
\]
according to Theorem \ref{RN_divisor}, Statement 2. The assertion now follows from Closeness to Zeros, cf.\ \cite[Theorem 6.3]{brad_nbvp}.
\end{proof}
\section*{Acknowledgements}


Frank Herrlich, Stefan K\"uhnlein, \'Angel Mor\'an Ledezma and 
David Weisbart are thanked for valuable discussions. 

\bibliographystyle{plain}
\bibliography{biblio}

\begin{thebibliography}{10}

\bibitem{BW2019}
E.~Bakken and D.~Weisbart.
\newblock $p$-adic {Brownian} motion as a limit of discrete time random walks.
\newblock {\em Commun. Math. Phys.}, 369:371--402, 2019.

\bibitem{Batyrev1999}
V.V. Batyrev.
\newblock Non-{Archimedean} integrals and stringy {Euler} numbers of
  log-terminal pairs.
\newblock {\em J. Eur. Math. Soc.}, 1:5--33, 1999.

\bibitem{RETMumf}
P.E. Bradley.
\newblock Riemann existence theorems of {Mumford} type.
\newblock {\em Mathematische Zeitschrift}, 251(2):393--414, 2005.

\bibitem{ExpliCycMumf}
P.E. Bradley.
\newblock Cyclic coverings of the $p$-adic projective line by {Mumford} curves.
\newblock {\em manuscripta mathematica}, 124(1):77--95, 2007.

\bibitem{diffMfp}
P.E. Bradley.
\newblock Diffusion operators on $p$-adic analytic manifolds.
\newblock {\em Results in Mathematics}, 81:129, 2026.

\bibitem{brad_nbvp}
P.E. Bradley.
\newblock Neumann boundary value problems on $p$-adic analytic manifolds.
\newblock preprint, 2026.

\bibitem{brad_habil}
P.E. Bradley.
\newblock Compact $p$-adic analytic manifolds and applications in arithmetic
  geometry.
\newblock Habilitation Thesis, in preparation.

\bibitem{HearingSerre}
P.E. Bradley and \'A.M. Ledezma.
\newblock Hearing the {Serre} invariant of a compact $p$-adic analytic
  manifold.
\newblock {\em Mathematische Nachrichten}, Version of Record before inclusion
  in an issue, 2026.

\bibitem{BKL2026}
P.~Bürgisser, A.~Kulkarni, and A.~Lerario.
\newblock Nonarchimedean integral geometry.
\newblock {\em Selecta Mathematica}, 32(10), 2026.

\bibitem{GWZ2020}
M.~Groechenig and D.~Wyss.
\newblock Mirror symmetry for moduli spaces of {Higgs} bundles via $p$-adic
  integration.
\newblock {\em Invent. math.}, 221:505--596, 2020.

\bibitem{HR2008}
T.~Hausel and F.~Rodriguez-Villegas.
\newblock Mixed {Hodge} polynomials of character varieties. with an appendix by
  nicholas m. katz.
\newblock {\em Invent. math.}, 174:555--624, 2008.

\bibitem{Igusa2001}
J.-I. Igusa.
\newblock {\em An introduction to the theory of local {Zeta} functions},
  volume~14 of {\em AMS/IP studies in advanced mathematics}.
\newblock American Mathematical Society, International Press, 2002.

\bibitem{Ito2004}
T.~Ito.
\newblock Stringy hodge numbers and $p$-adic hodge theory.
\newblock {\em Compositio Math.}, 140:1499--1517, 2004.

\bibitem{KLP2018}
M.E. Kazaryan, S.K. Lando, and V.V. Prasolov.
\newblock {\em Algebraic Curves. Towards Moduli Spaces}, volume~2 of {\em
  Moscow Lectures}.
\newblock Springer Nature Switzerland AG, Cham, 2018.
\newblock Translated from the Russian by Natalia Tsilevich.

\bibitem{Liu2002}
Q.~Liu.
\newblock {\em Algebraic Geometry and Arithmetic Curves}.
\newblock Oxford Graduate Texts in Mathematics, vol. 6. Oxford University
  Press, Oxford, 2002.
\newblock Translated by R. Ern\'e.

\bibitem{Oesterle1984}
J.~Oesterlé.
\newblock Nombres de {Tamagawa} et groupes unipotents en characteristique $p$.
\newblock {\em Inventiones mathematicae}, 78:13--88, 1984.

\bibitem{PW2025}
T.~Pierce and D.~Weisbart.
\newblock Brownian motion in the $p$-adic integers is a limit of discrete time
  random walks.
\newblock {\em J Stat Phys}, 192:104, 2025.

\bibitem{Schneider2011}
P.~Schneider.
\newblock {\em $p$-adic {Lie} groups}.
\newblock Grundlehren der mathematischen Wissenschaften 344. Springer, Berlin,
  2011.

\bibitem{Serre1965}
J.-P. Serre.
\newblock Classiﬁcation des variétés analytiques $p$-adiques compactes.
\newblock {\em Topology}, 3:409--412, 1965.

\bibitem{Serre1992}
J.-P. Serre.
\newblock {\em Lie Algebras and {Lie} Groups. Lectures given at {Harvard
  University}}.
\newblock Lecture Notes in Mathematics 1500. Springer, 1992.

\bibitem{stacks}
{The Stacks Project Authors}.
\newblock Stacks project.
\newblock \url{https://stacks.math.columbia.edu/}.

\bibitem{Vakil2017}
R.~Vakil.
\newblock {\em The Rising Sea: Foundations of Algebraic Geometry}.
\newblock math216.wordpress.com, November 18, 2017 draft.

\bibitem{WeilAAG}
A.~Weil.
\newblock {\em Adeles and Algebraic Groups}.
\newblock Progress in Mathematics 23. Birkhäuser, Boston, 1982.

\bibitem{Weisbart2024}
D.~Weisbart.
\newblock $p$-adic {Brownian} motion is a scaling limit.
\newblock {\em J. Phys. A: Math. Theor.}, 57:205203, 2024.

\bibitem{Yasuda2017}
T.~Yasuda.
\newblock The wild {McKay} correspondence and $p$-adic measures.
\newblock {\em J. Eur. Math. Soc.}, 19(12):3709--3743, 2017.

\bibitem{Zuniga2020}
W.A. Zúñiga-Galindo.
\newblock Reaction-diffusion equations on complex networks and {Turing}
  patterns via $p$-adic analysis.
\newblock {\em Journal of Mathematical Analysis and Applications},
  491(1):124239, 2020.

\end{thebibliography}

\end{document}